\newtheorem{theorem}{Theorem}[section]
\newtheorem{lemma}[theorem]{Lemma}
\newtheorem{proposition}[theorem]{Proposition}
\newtheorem{corollary}[theorem]{Corollary}
\theoremstyle{definition}
\newtheorem{definition}[theorem]{Definition}
\newtheorem{problem}[theorem]{Problem}
\theoremstyle{remark}
\newtheorem{remark}[theorem]{Remark}
\numberwithin{equation}{section}
\def\R{{\mathbb R}}
\def\Z{{\mathbb Z}}
\def\C{{\mathbb C}}
\def\cA{{\mathcal A}}
\def\cL{{\mathcal L}}
\def\Re{{\rm Re}\,}
\def\Im{{\rm Im}\,}
\def\TcT{({\mathbf T} (t))_{t\geq 0}}
\def\TcT0{({\mathbf T}_0 (t))_{t\geq 0}}
\def\L1{L^1 (\R_+ )}
\def\mpd{M_*^{{~}*}}
\begin{document}

\title[Operators $L^1 (\R_+ )\to X$ and norm continuity of semigroups]{Operators $L^1 (\R_+ )\to X$ and the norm continuity problem for semigroups}


\author{Ralph Chill}
\address{Universit\'e Paul Verlaine - Metz, Laboratoire de Math\'ematiques et Applications de Metz - CNRS, UMR 7122, B\^at. A, Ile du Saulcy, 57045 Metz Cedex 1, France}
\email{chill@univ-metz.fr}

\author{Yuri Tomilov}
\address{Faculty of Mathematics and Computer Science, Nicolas Copernicus University, ul. Chopina 12/18, 87-100 Torun, Poland}
\email{tomilov@mat.uni.torun.pl}
\thanks{Supported by the Marie Curie ''Transfer of Knowledge'' programme, project ''TODEQ''. This research was started during a Research in Pair stay at the Mathematisches Forschungsinstitut Oberwolfach. Supports are gratefully acknowledged.}

\subjclass[2000]{Primary 47D03; Secondary  44A10, 46G10, 46J25}

\date{May 22, 2008}


\keywords{$C_0$ semigroup, norm continuity, resolvent, Banach algebra homomorphism, Laplace transform}

\begin{abstract}
We present a new method for constructing
$C_0$-semigroups for which properties of the resolvent of the generator and 
continuity properties of the semigroup in the operator-norm topology are controlled simultaneously.
It allows us to show that a) there exists a $C_0$-semigroup which is continuous in the operator-norm topology for no $t \in [0,1]$  such that the resolvent of its generator has a logarithmic decay at infinity along vertical lines;
b)  there exists a $C_0$-semigroup which is continuous in the operator-norm topology for no 
$t \in \mathbb R_+$ such that the resolvent of its generator has a decay along vertical lines arbitrarily close to a logarithmic one. These examples rule out any possibility of characterizing norm-continuity of semigroups on arbitrary Banach spaces in terms of resolvent-norm decay on vertical lines.
\end{abstract}

\maketitle

\section{Introduction}
The study of continuity properties of $C_0$-semigroups $(T(t))_{t\geq 0}$ on a Banach space $X$ in the uniform operator topology of $\cL (X)$ (norm-continuity) has been initiated in \cite{HiPh57} and attracted considerable attention over the last decades; see in particular \cite{Ba07}, \cite{Bl01}, \cite{BlMa96}, \cite{ElMEn94}, \cite{ElMEn96}, \cite{GoWe99}, \cite{Il07},  \cite{Ma08}, \cite{Ya95}, \cite{Yo92}.

The classes of immediately norm-continuous semigroups, of eventually norm-continuous semigroups,
and of asymptotically norm-continuous semigroups (or, equivalently, semigroups norm continuous at infinity)
emerged and were studied in depth during this period.
The interest in these classes comes mainly from the fact that a condition of norm continuity of a semigroup implies a variant of the spectral mapping theorem, and thus asymptotic properties of a semigroup are essentially determined by the spectrum of the generator.

One of the main issues in the study of norm-continuity is to characterize these classes in terms of the resolvent of the semigroup generator (or in other a priori terms).
In particular, the so called norm-continuity problem for $C_0$-semigroups attributed to A. Pazy was a focus for relevant research during the last two decades.

Given a $C_0$-semigroup $(T(t))_{t\geq 0}$ on a Banach space $X$, with generator $A$, the problem is to determine whether the resolvent decay condition 
\begin{equation} \label{res-cond}
\lim_{|\beta |\to\infty} \| R(\omega +i\beta , A)\| = 0 \text{ for some } \omega\in\R 
\end{equation}
implies that the semigroup is  immediately norm-continuous, that is, norm-continuous for $t>0$. The decay condition \eqref{res-cond} is certainly necessary for immediate norm-continuity, by the fact that the resolvent of the generator is the Laplace transform of the semigroup, and by a simple application of the Lemma of Riemann-Lebesgue. Hence, the question is whether condition \eqref{res-cond} characterizes immediate norm-continuity. \\

The resolvent decay condition \eqref{res-cond} does characterize immediate norm continuity if the underlying Banach space is a Hilbert space \cite{Yo92}, \cite{ElMEn94}, \cite{Ya95}, \cite[Theorem 3.13.2]{ABHN01}, or if it is an $L^p$ space and the semigroup is positive, \cite{GoWe99}. Only very recently, T. Matrai \cite{Ma08} constructed a counterexample showing that the answer to the norm continuity problem is negative in general. The generator in his example is an infinite direct sum of Jordan blocks on finite dimensional spaces. The infinite sum is equipped with an appropriate norm and the resulting Banach space is reflexive. This kind of counterexample going back to \cite{Za75} has been used in the spectral theory of semigroups to show the failure of the spectral mapping theorems or  certain relationships between semigroup growth bounds, see for example \cite{ABHN01}, \cite{EnNa99}.\\

We point out that the resolvent decay condition \eqref{res-cond} implies that the resolvent exists and is uniformly bounded in a domain of the form
$$
\Sigma_\varphi := \{ \lambda\in\C : {\rm Re}\, \lambda > - \varphi (|{\rm Im}\, \lambda |) \} ,
$$
where $\varphi\in C(\R_+ )$ satisfies $\lim_{\beta\to\infty} \varphi (\beta ) = \infty$. It is known that the existence of the resolvent and its uniform boundedness in such a domain can imply regularity properties of the semigroup if the function $\varphi$ is growing sufficiently fast: we recall corresponding results for analytic, immediately differentiable and eventually differentiable semigroups, \cite[Theorem 3.7.11]{ABHN01}, \cite[Theorems 4.7, 5.2]{Pa83}. It follows from the proofs of these results (which use the complex inversion formula for Laplace transforms) that there are similar results in the more general context of Laplace transforms of vector-valued functions; see, for example, \cite[Theorem 2.6.1]{ABHN01}, \cite[I.4.7, II.7.4]{Do50}, \cite{Sv79}, \cite{Re87}. One could therefore think of the following Laplace transform version of the norm-continuity problem: if the Laplace transform of a bounded scalar (or vector-valued) function extends analytically to a bounded function in some domain $\Sigma_\varphi$, where $\varphi\in C(\R_+ )$ satisfies $\lim_{\beta\to\infty} \varphi (\beta ) = \infty$, is the function immediately or eventually continuous? It is relatively easy to give counterexamples to this Laplace transform version of the norm-continuity problem. It follows from the main result in this article (Theorem \ref{main}) that every counterexample to the norm-continuity problem for scalar functions yields a counterexample to the the norm-continuity problem for semigroups.\\

As indicated in the title of this article, we approach the problem of norm-continuity via Banach algebra homomorphisms $\L1 \to \cA$. 
The connection between semigroups and such homomorphisms is well-known.
We recall that to every bounded $C_0$-semigroup $(T(t))_{t\geq 0}$ on a Banach space $X$ one can associate an algebra homomorphism $T:\L1 \to \cL (X)$ given by
\begin{equation}\label{hom-def}
Tg = \int_0^\infty T(t) g(t) \; dt , \quad g\in\L1 \quad \text{(integral in the strong sense)}.
\end{equation}
Conversely, every algebra homomorphism $\L1 \to \cA$ is, after passing to an equivalent homomorphism, of this form; cf. Lemma \ref{equivalenthomomorphism} below. 

It is therefore natural to ask how regularity properties of the semigroup or the resolvent of its generator are encoded in the corresponding algebra homomorphism or its adjoint. We will discuss some of the connections in the first part of this article, partly in the context of general operators $\L1\to X$. Then, given a function $f\in L^\infty (\R_+ )$ such that its Laplace transform extends to a bounded analytic function on some domain $\Sigma_\varphi$, we will show how to construct an algebra homomorphism $T: \L1 \to \cL (X)$ which is represented (in the strong sense) by a $C_0$-semigroup such that $f\in {\rm range}\, T^*$ and such that the resolvent of the generator satisfies a precise decay estimate. In fact, the space $X$ will be continuously embedded into $L^\infty (\R_+ )$ and left-shift invariant, and the operator $T$ will be represented by the left-shift semigroup on $X$. In this way, we will be able to show that the norm-continuity problem has a negative solution and at the same time we will be able to estimate the resolvent decay along vertical lines. It turns out that the decay $\| R(\omega +i\beta ,A)\| = O(1/\log |\beta |)$ implies eventual differentiability but not immediate norm-continuity, and that any slower decay does not imply eventual norm-continuity.

The paper is organized as follows. In the second  section, we remind some basic properties
and definitions from the theory of operators $L^1\to X$ needed in the sequel,
and introduce the notion of a Riemann-Lebesgue operator. In the third section,
we set up a framework of homomorphisms $L^1\to \cA$ and establish the relation
to the norm continuity problem for semigroups. The main, fourth section, is devoted to the construction of Riemann-Lebesgue homomorphisms. Finally, in the fifth section, we apply the main result from the fourth section to give counterexamples to the norm-continuity problem. 

\section{Operators $\L1 \to X$}

Operators $L^1\to X$ and their representations is a classical subject  of both  operator theory and  geometric theory of Banach spaces. For a more or less complete account of basic properties of these operators one may consult \cite{DiUh77}, and a selection of more recent advances pertinent to our studies include \cite{Bo80II}, \cite{Bo80}, \cite{CrGo84}, \cite{GhTa84}, \cite{Gi90}, \cite{GiJo97}, \cite{Ho89}, \cite{Iv07}, \cite{KPRU89}.

The following representation of operators $\L1 \to X$ by vector-valued Lip\-schitz continuous functions on $\R_+$ will be used in the sequel. We denote by ${\rm Lip}_0 (\R_+ ;X)$ the Banach space of all Lipschitz continuous functions $F:\R_+ \to X$ satisfying $F(0)=0$. Then, for every $F\in {\rm Lip}_0 (\R_+ ;X)$ the operator $T_F: \L1 \to X$ given by the Stieltjes integral
\begin{equation} \label{TF}
T_F g := \int_0^\infty g(t) \; dF(t) , \quad g\in\L1 ,
\end{equation}
is well defined and bounded, and it turns out that {\em every} bounded operator $T:\L1 \to X$ is of this form. In fact, by the Riesz-Stieltjes representation theorem \cite[Theorem 2.1.1]{ABHN01}, the operator $F \to T_F$ is an (isometric) isomorphism from ${\rm Lip}_0 (\R_+ ;X)$ onto $\cL (\L1 ,X)$. \\ 

There are several analytic properties of operators $L^1 \to X$ which have been defined and studied in the literature. Among them, we will recall Riesz representability and the (local) Dunford-Pettis property, and we introduce the Riemann-Lebesgue property. The first and the last will be relevant for this article while the (local) Dunford-Pettis property is mentioned for reasons of comparison. \\

Throughout the following, for every $\lambda\in\C$ and every $t\in\R_+$, we define $e_{\lambda} (t) := e^{-\lambda t}$. If $\lambda$ belongs to the open right half-plane $\C_+$, then $e_\lambda \in \L1$.  

Recall that an operator between two Banach spaces is called {\em Dunford-Pettis} or {\em completely continuous} if it maps weakly convergent sequences into norm convergent sequences.

\begin{definition} \label{tproperties}
Let $T:\L1\to X$ be a bounded operator. 
\begin{itemize}
\item[(a)] We call $T$ {\em Riesz representable}, or simply {\em representable}, if there exists a function $f\in L^\infty (\R_+ ;X)$ such that 
\begin{equation} \label{rep1}
T g = \int_{\R_+} g(s) f(s) \; ds \text{ for every } g\in \L1 .
\end{equation}
\item[(b)] We call $T$ {\em locally Dunford-Pettis} if for every measurable $K\subset\R_+$ of finite measure the restriction of $T$ to $L^1 (K)$ is Dunford-Pettis. 
\item[(c)] We call $T$ {\em Riemann-Lebesgue} if $\lim_{|\beta|\to\infty} \| T (e_{i\beta} g)\| = 0$ for every $g\in \L1$. 
\end{itemize}
\end{definition}

The definitions of representable and local Dunford-Pettis operators clearly make sense on general $L^1$ spaces. For some operator theoretical questions it may be more natural to consider operators on $L^1 (0,1)$ or a similar $L^1$ space. In the context of (bounded) $C_0$-semigroups, the space $\L1$ is appropriate.

We point out that if $F\in {\rm Lip}_0 (\R_+ ;X)$ and if $T = T_F:\L1 \to X$ is representable by some function $f\in L^\infty (\R_+ ;X)$, then necessarily $F(t) = \int_0^t f(s) \; ds$. In fact, $T=T_F$ is representable if and only if the function $F$ admits a Radon-Nikodym derivative in $L^\infty (\R_+ ;X)$. 

\begin{proposition} \label{implications}
Let $T:\L1\to X$ be a bounded operator. The following implications are true:
\begin{eqnarray*}
& T \text{ is weakly compact} & \\
& \Downarrow & \\
& T \text{ is representable} & \\
& \Downarrow & \\
& T \text{ is locally Dunford-Pettis} & \\
& \Downarrow & \\
& T \text{ is Riemann-Lebesgue.} & 
\end{eqnarray*}
\end{proposition}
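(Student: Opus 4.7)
The plan is to prove the three implications separately, leveraging the Riesz-Stieltjes identification $T \leftrightarrow F$ from \cite[Theorem 2.1.1]{ABHN01} together with standard Radon-Nikodym and uniform integrability arguments.

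For the first implication, suppose $T$ is weakly compact, and consider the associated Lipschitz function $F(t) := T\chi_{[0,t]} \in {\rm Lip}_0 (\R_+ ;X)$ with $\|F(t)-F(s)\| \leq \|T\| \abs{t-s}$. Its range is contained in the weakly compact set $\|T\| \cdot \overline{T(B_{\L1})}^w$. Since weakly compact subsets of a Banach space have the Radon-Nikodym property (Diestel-Uhl), the Lipschitz map $F$ is differentiable almost everywhere with essentially bounded derivative $f := F'$, and $F(t) = \int_0^t f(s)\, ds$. Hence $T = T_F$ is representable by $f \in L^\infty (\R_+ ;X)$.

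For the second implication, assume $T$ is represented by $f \in L^\infty (\R_+ ;X)$ and fix $K \subset \R_+$ with $\abs{K} < \infty$. Let $(g_n)$ be weakly null in $L^1(K)$, so $(g_n)$ is uniformly integrable. By the Pettis measurability theorem, $f$ is essentially separably valued on $K$, so a Lusin-type argument produces, for any $\delta > 0$, a compact subset $K' \subset K$ with $\abs{K\setminus K'} < \delta$ on which $f$ is continuous; in particular $f(K')$ is norm-compact in $X$. Given $\epsilon > 0$, approximate $f|_{K'}$ uniformly within $\epsilon$ by a simple function $\sum_{j=1}^N x_j \chi_{E_j}$. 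Then
\[
\|T g_n\| \leq \sum_{j=1}^N \|x_j\| \, \bigl| \textstyle\int_{E_j} g_n \, ds \bigr| + \epsilon \|g_n\|_1 + \|f\|_\infty \int_{K \setminus K'} \abs{g_n} \, ds ,
\]
where the first sum tends to zero by scalar weak convergence, the second is uniformly small in $n$, and the third is made small uniformly in $n$ by uniform integrability (choosing $\delta$ accordingly). Thus $\|T g_n\| \to 0$.

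For the third implication, assume $T$ is locally Dunford-Pettis and fix $g \in \L1$ and $\epsilon > 0$. Choose $K \subset \R_+$ bounded with $\|g - g\chi_K\|_1 < \epsilon/(\|T\|+1)$, and set $g_K := g \chi_K \in L^1(K)$. The classical scalar Riemann-Lebesgue lemma yields that for every $h \in L^\infty(K)$ the integrals $\int_K e^{-i\beta t} g_K(t) h(t) \, dt$ tend to zero as $\abs{\beta} \to \infty$ (since $g_K h \in L^1$), so $e_{i\beta} g_K \to 0$ weakly in $L^1(K)$. Since $T|_{L^1(K)}$ is Dunford-Pettis, $\|T(e_{i\beta} g_K)\| \to 0$, and combining with the approximation bound $\|T(e_{i\beta}(g - g_K))\| \leq \|T\| \|g - g_K\|_1 < \epsilon$ gives $\limsup_{\abs{\beta} \to \infty} \|T(e_{i\beta} g)\| \leq \epsilon$. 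As $\epsilon$ was arbitrary, $T$ is Riemann-Lebesgue.

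The most delicate step is the second one: the compact-range approximation of $f$ on $K$ must be combined carefully with the uniform integrability of the weakly null sequence, and handling the vector-valued setting requires an explicit invocation of Pettis measurability. The first implication rests on the nontrivial fact that weakly compact sets have the Radon-Nikodym property, but this is purely citational; the third implication is essentially a density argument on top of the scalar Riemann-Lebesgue lemma.
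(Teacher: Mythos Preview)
Your argument is essentially correct, and for the third implication it is the same density/approximation argument the paper gives (the paper phrases it via the closedness of $\{g:\lim_{|\beta|\to\infty}\|T(e_{i\beta}g)\|=0\}$, you via an explicit $\varepsilon$-splitting, but these are equivalent). For the first two implications the paper simply cites Diestel--Uhl (\cite[Theorem~12, p.~75]{DiUh77} and \cite[Lemma~11, Theorem~15, pp.~74--76]{DiUh77}), whereas you unpack those citations into direct proofs. Your treatment of the second implication via Lusin's theorem, simple-function approximation of $f$ on a compact-range piece, and uniform integrability of weakly null sequences is a genuinely self-contained replacement for the Diestel--Uhl machinery; it buys independence from the reference at the cost of some length.

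One imprecision to repair in the first step: the \emph{range} of $F(t)=T\chi_{[0,t]}$ is \emph{not} contained in $\|T\|\cdot\overline{T(B_{\L1})}^w$, since $\|\chi_{[0,t]}\|_{L^1}=t$ is unbounded on $\R_+$. What actually feeds the Radon--Nikodym argument is that the \emph{difference quotients}
\[
\frac{F(t)-F(s)}{t-s}=T\!\left(\frac{\chi_{(s,t]}}{t-s}\right)\in \overline{T(B_{\L1})}^{\,w}
\]
lie in a fixed weakly compact (hence dentable, hence RNP) set; equivalently, restrict $F$ to each interval $[0,n]$ and patch. With that correction your first implication goes through as intended.
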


\begin{proof}
The first implication follows from \cite[Theorem 12, p.75]{DiUh77}, while the second implication is a consequence of \cite[Lemma 11, p.74, and Theorem 15, p.76]{DiUh77}. Note that these results only deal with finite measure spaces whence the necessity to consider local Dunford-Pettis operators. 

In order to prove the last implication, one has to remark that for general $T:\L1 \to X$ the space
\begin{equation} \label{rlclosed}
E := \{ g\in \L1 : \lim_{\beta\to\infty} \| T(e_{i\beta} g)\| =0 \} \text{ is closed in } \L1 .
\end{equation}
Next, by the Lemma of Riemann-Lebesgue, for every $g\in L^1 (\R_+ )$ one has $w-\lim_{\beta\to\infty} e_{i\beta} g = 0$ in $\L1$ and $L^1 (K )$, where $K$ is any compact subset of $\R_+$. Hence, if $T$ is locally Dunford-Pettis, then the space $E$ contains all compactly supported functions in $\L1$, and since this space is dense in $\L1$, the operator $T$ must be Riemann-Lebesgue. 
\end{proof}

The properties from Definition \ref{tproperties} have also been defined for Banach spaces instead of single operators. For example, a Banach space $X$ has the {\em Radon-Nikodym property} if every operator $T:\L1 \to X$ is representable, \cite{DiUh77}, or, equivalently, if every function in ${\rm Lip}_0 (\R_+;X)$ admits a Radon-Nikodym derivative in $L^\infty (\R_+ ;X)$.

Similarly, a Banach space $X$ has the {\em complete continuity property} if every operator $T:\L1\to X$ is locally Dunford-Pettis. Note that the Dunford-Pettis property for Banach spaces has also been defined in the literature, but is different from the complete continuity property, \cite[Definition 3.7.6]{MN91}.

Finally, a Banach space $X$ has the {\em Riemann-Lebesgue property} if every operator $T:\L1\to X$ is Riemann-Lebesgue. The Riemann-Lebesgue property for Banach spaces has been defined only recently, \cite{BuCh02}, and Definition \ref{tproperties} is perhaps the first instance where the Riemann-Lebesgue property is defined for a single operator. 
 
It has been recently shown that the complete continuity property and the Riemann-Lebesgue property for Banach spaces are equivalent, \cite{Iv07}. It is therefore natural to ask whether a similar result holds for single operators.

\begin{problem} \label{problemrldp}
Is every Riemann-Lebesgue operator $T:\L1 \to X$ a local Dunford-Pettis operator?
\end{problem}

The following theorem gives a characterization of Riemann-Lebesgue operators using only exponential functions.

\begin{theorem} \label{rl-char}
An operator $T:\L1 \to X$ is a Riemann-Lebesgue operator if and only if $\lim_{|\beta |\to\infty} \| T e_{\omega+i\beta} \| = 0$ for some/all $\omega >0$. 
\end{theorem}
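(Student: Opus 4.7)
The necessity is immediate: if $T$ is Riemann-Lebesgue, then for any $\omega>0$ the function $e_\omega$ lies in $L^1(\mathbb{R}_+)$, so $\|T e_{\omega+i\beta}\|=\|T(e_{i\beta}\cdot e_\omega)\|\to 0$ as $|\beta|\to\infty$. This handles both the ``some'' and ``all'' versions.

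For the sufficiency, suppose the decay holds for some fixed $\omega>0$. I would work with the set
\[
E:=\bigl\{g\in L^1(\mathbb{R}_+):\lim_{|\beta|\to\infty}\|T(e_{i\beta}g)\|=0\bigr\},
\]
which is closed in $L^1(\mathbb{R}_+)$ by the observation \eqref{rlclosed} already recorded in the proof of Proposition~\ref{implications}. The goal is to show $E=L^1(\mathbb{R}_+)$. First I would note that $E$ contains every exponential $e_{\omega+i\gamma}$ ($\gamma\in\mathbb{R}$), because
\[
\|T(e_{i\beta}\cdot e_{\omega+i\gamma})\|=\|T e_{\omega+i(\beta+\gamma)}\|\longrightarrow 0 \quad\text{as }|\beta|\to\infty,
\]
the limit being an immediate consequence of the hypothesis after translating $\beta\mapsto\beta+\gamma$.

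The remaining point is that the closed linear span of $\{e_{\omega+i\gamma}:\gamma\in\mathbb{R}\}$ is all of $L^1(\mathbb{R}_+)$. By Hahn--Banach it suffices to check that any $f\in L^\infty(\mathbb{R}_+)$ annihilating every $e_{\omega+i\gamma}$ vanishes. But $\int_0^\infty f(t)e^{-(\omega+i\gamma)t}\,dt=0$ for all $\gamma\in\mathbb{R}$ means that the Fourier transform of the $L^1$-function $t\mapsto\mathbf 1_{\mathbb{R}_+}(t)\,f(t)e^{-\omega t}$ is identically zero, hence $f(t)e^{-\omega t}=0$ a.e., so $f=0$. Combining this density with the closedness of $E$ yields $E=L^1(\mathbb{R}_+)$, which is precisely the Riemann-Lebesgue property.

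No step looks genuinely hard; the one small point worth isolating is the ``frequency translation'' trick that upgrades the hypothesis from the single family $\{e_{\omega+i\beta}\}$ to the fact that each individual $e_{\omega+i\gamma}$ belongs to $E$. Once that is in hand, the density argument via Fourier transforms finishes the proof.
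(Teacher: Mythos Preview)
Your proof is correct, and it follows the same overall scheme as the paper---show that the closed set $E$ from \eqref{rlclosed} contains a total subset of $L^1(\R_+)$---but the total set you choose and the way you reach it are genuinely different. The paper first passes from the single abscissa $\omega$ to \emph{all} $\alpha>0$ by exploiting analyticity of $\lambda\mapsto Te_\lambda$ on vertical strips together with Vitali's theorem, thereby obtaining $\{e_\alpha:\alpha>0\}\subset E$, and then invokes uniqueness of the Laplace transform for totality. You instead stay on the fixed vertical line $\Re\lambda=\omega$ and use the purely algebraic identity $e_{i\beta}\,e_{\omega+i\gamma}=e_{\omega+i(\beta+\gamma)}$ to place the family $\{e_{\omega+i\gamma}:\gamma\in\R\}$ inside $E$, after which Fourier uniqueness (applied to the $L^1$ function $t\mapsto f(t)e^{-\omega t}$) gives totality. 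Your route is more elementary in that it avoids any complex-analytic normal-families argument; the paper's route, on the other hand, establishes the ``some $\omega$ $\Rightarrow$ all $\omega$'' part of the statement directly rather than deducing it a posteriori from the Riemann--Lebesgue conclusion.
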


\begin{proof}
Assume first that $T:\L1 \to X$ is a bounded operator satisfying \linebreak[4] $\lim_{|\beta |\to\infty} \| T e_{\omega+i\beta} \| = 0$ for some $\omega >0$. 

Let $0<a<\omega <b<\infty$, and define the closed strip $S:= \{ \lambda\in\C_+ : a \leq \Re \lambda \leq b \}$. The function 
\begin{eqnarray*}
f : S & \to & X ,\\
 \lambda & \mapsto & Te_\lambda ,
\end{eqnarray*}
is bounded, continuous on $S$, and analytic in the interior of $S$. By a standard argument from complex function theory (involving Vitali's theorem) and the assumption we obtain
$$
\lim_{|\beta |\to\infty } \| T(e_{\alpha +i\beta}) \| =0 ,
$$
for all $\alpha \in (a , b)$. Since $a\in (0,\omega )$ and $b\in (\omega ,\infty)$ are arbitrary, the above equation is true for every $\alpha \in (0,\infty )$.

Next, recall from \eqref{rlclosed} that the space of all $g\in\L1$ such that $\lim_{|\beta |\to\infty} \| T(e_{i\beta} g)\| = 0$ is closed in $\L1$. By the preceding argument, this space contains the set $\{ e_\alpha : \alpha >0 \}$. Since this set is total in $\L1$, by the Hahn-Banach theorem and by uniqueness of the Laplace transform, it therefore follows that $T$ is a Riemann-Lebesgue operator.

The other implication is trivial. 
\end{proof}

\begin{corollary} \label{rlcharls}
Let $F\in {\rm Lip}_0 (\R_+ ;X)$, and let $T_F:\L1 \to X$ be the corresponding bounded operator given by \eqref{TF}. Denote by $\widehat{dF}$ the {\em Laplace-Stieltjes transform} of $F$, that is, 
$$
\widehat{dF}(\lambda ) := \int_0^\infty e^{-\lambda t} \; dF(t) , \quad \lambda\in\C_+ .
$$
Then $T_F$ is a Riemann-Lebesgue operator if and only if
$$
\lim_{|\beta|\to\infty} \widehat{dF} (\omega +i\beta ) = 0 \text{ for some/all } \omega >0 .
$$ 
\end{corollary}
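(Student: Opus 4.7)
The plan is to derive the corollary as a direct consequence of Theorem \ref{rl-char} by evaluating the operator $T_F$ on the exponential test functions $e_\lambda$. The first step is to observe that for any $\lambda \in \C_+$, the definition \eqref{TF} of $T_F$ as a Stieltjes integral gives
\[
T_F e_\lambda = \int_0^\infty e^{-\lambda t} \, dF(t) = \widehat{dF}(\lambda),
\]
so in particular $\| T_F e_{\omega + i\beta} \|_X = \| \widehat{dF}(\omega + i\beta) \|_X$ for every $\omega > 0$ and $\beta \in \R$.

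With this identification in hand, both implications are immediate from Theorem \ref{rl-char}. If $T_F$ is Riemann-Lebesgue, then by specializing the Riemann-Lebesgue condition to $g = e_\omega \in \L1$, one obtains $\lim_{|\beta| \to \infty} \| T_F(e_{i\beta} e_\omega) \|_X = \lim_{|\beta| \to \infty} \| T_F e_{\omega + i\beta} \|_X = 0$, hence $\lim_{|\beta| \to \infty} \widehat{dF}(\omega + i\beta) = 0$ for every $\omega > 0$. Conversely, if this limit vanishes for some (equivalently, by Theorem \ref{rl-char} and the analyticity-plus-Vitali argument used in its proof, all) $\omega > 0$, then the hypothesis $\lim_{|\beta| \to \infty} \| T_F e_{\omega + i\beta} \|_X = 0$ of Theorem \ref{rl-char} is satisfied, and we conclude that $T_F$ is Riemann-Lebesgue.

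There is essentially no obstacle beyond recognizing the Laplace--Stieltjes transform inside $T_F e_\lambda$; the some/all dichotomy in the statement is automatic, since Theorem \ref{rl-char} already contains it. The corollary is thus a pure translation of Theorem \ref{rl-char} into the language of the Riesz--Stieltjes representation.
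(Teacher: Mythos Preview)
Your proof is correct and follows exactly the same approach as the paper: both simply observe that $T_F e_{\omega+i\beta} = \widehat{dF}(\omega+i\beta)$ by the definition of $T_F$ as a Stieltjes integral, and then invoke Theorem~\ref{rl-char}. If anything, you have written out more detail than the paper, which dispatches the corollary in a single sentence.
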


\begin{proof}
This is a direct consequence from Theorem \ref{rl-char} and the definition of the representing function.
\end{proof}

\section{Algebra homomorphisms $\L1 \to \cA$ and the norm-continuity problem}

In the following, we will equip $\L1$ with the usual convolution product given by
$$
(f*g)\, (t) = \int_0^t f(t-s) g(s) \; ds , \quad f, \, g\in\L1 .
$$
Then $\L1$ is a commutative Banach algebra with bounded approximate identity; for example, the net $(\lambda e_\lambda)_{\lambda\nearrow \infty}$ is an approximate identity bounded by $1$.

Let $\cA$ be a Banach algebra. If $(a(t))_{t>0}\subset \cA$ is a uniformly bounded and continuous semigroup, then the operator $T:\L1 \to \cA$ given by
\begin{equation} \label{Ta}
Tg = \int_0^\infty a(t) g(t) \; dt 
\end{equation}
is an algebra homomorphism as one easily verifies. Conversely, if $T:\L1 \to \cA$ is an algebra homomorphism, then $T$ is represented as above, but $(a(t))_{t>0}$ is a semigroup of {\em multipliers} on $\cA$ and the integral is to be understood in the sense of the strong topology of the multiplier algebra $\mathcal M(A)$; see, for example, \cite[Theorems 3.3 and 4.1]{Cj99}, \cite[Proposition 1.1]{Pe97}. We will state this result in a slightly different form, more convenient to us, using the notion of equivalent operators which we introduce here.\\

We call two operators $T:\L1\to X$ and $S:\L1\to Y$ {\em equivalent} if there exist constants $c_1$, $c_2\geq 0$ such that 
$$
\| Tg\|_X \leq c_1 \, \| Sg\|_Y \leq c_2 \, \| Tg\|_X \quad \text{for every } g\in\L1 .
$$
It is easy to check that properties like weak compactness, representability, the local Dunford-Pettis property and the Riemann-Lebesgue property are invariant under equivalence, that is, for example, if $T$ and $S$ are equivalent, then $T$ is representable if and only if $S$ is representable; one may prove that if $F_T$ and $F_S$ are the representing Lipschitz functions, then $F_T$ is differentiable almost everywhere if and only if $F_S$ is differentiable almost everywhere (use that difference quotients are images of multiples of characteristic functions). We point out that two operators $T$ and $S$ are equivalent if and only if ${\rm range}\, T^* = {\rm range}\, S^*$; compare with \cite[Theorem 1]{Em73}. \\

The proof of the following lemma is similar to the proof of \cite[Theorem 1]{Cj98}
(see also \cite[Corollary 4.3]{Ki98}, \cite[Theorem 10.1]{Ki00}, \cite[Theorem 1.1]{Bo01}).

\begin{lemma} \label{equivalenthomomorphism}
For every algebra homomorphism $T : \L1 \to \cA$ there exists a Banach space $X_0$, an equivalent algebra homomorphism $S:\L1 \to \cL (X_0)$ and a uniformly bounded $C_0$-semigroup $(S(t))_{t\geq 0}\subset \cL (X_0 )$ such that for every $g\in\L1$ 
$$
Sg = \int_0^\infty S(t) g(t) \; dt \quad \text{(integral in the strong sense)} .
$$
If $\cA \subset \cL (X)$ as a closed subspace, then $X_0$ can be chosen to be a closed subspace of $X$. 
\end{lemma}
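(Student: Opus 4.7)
The plan is to reduce the general case to the situation $\cA\subset\cL(X)$ (via the standard left regular representation of $\cA$ on its unitization $\cA_1$, which is an isometric algebra homomorphism) and then to construct a closed $T(\L1)$-invariant subspace $X_0\subset X$ on which a $C_0$-semigroup $(S(t))_{t\geq 0}$ acts and realizes $T$ by strong integration. The key algebraic ingredient is the right-translation semigroup $(\tau_t)_{t\geq 0}$ on $\L1$ together with the convolution identity $g*\tau_t f = \tau_t g * f = \tau_t(g*f)$; the key analytic ingredient is Cohen's factorization theorem.

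First I would set $X_0 := \overline{T(\L1)\,X}$. The bounded approximate identity $u_\lambda := \lambda e_\lambda$ of $\L1$ satisfies $u_\lambda * g \to g$ in $\L1$, so $T(u_\lambda)T(g)x = T(u_\lambda * g)x \to T(g)x$; by density and the uniform bound $\|T(u_\lambda)\|\leq\|T\|$, the net $T(u_\lambda)$ converges strongly to the identity on $X_0$, making $X_0$ a non-degenerate Banach module over $\L1$ via $g\cdot\xi:=T(g)\xi$.

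To define the semigroup, apply Cohen's factorization theorem in its norm-controlled form: for any prescribed $\epsilon>0$, every $\xi\in X_0$ can be written as $\xi = T(f)\eta$ with $f\in\L1$, $\eta\in X_0$, $\|f\|_1\leq 1$, and $\|\eta\|\leq(1+\epsilon)\|\xi\|$. I then set
$$
S(t)\xi := T(\tau_t f)\,\eta .
$$
For well-definedness, observe that for every $g\in\L1$,
$$
T(g)\,T(\tau_t f)\,\eta = T(g*\tau_t f)\,\eta = T(\tau_t g)\,T(f)\,\eta = T(\tau_t g)\,\xi ,
$$
which depends only on $\xi$; choosing $g=u_\lambda$ and letting $\lambda\to\infty$ (using $T(u_\lambda)\zeta\to\zeta$ on $X_0$) shows that $T(\tau_t f)\eta$ itself depends only on $\xi$. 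The norm-controlled factorization combined with $\|\tau_t f\|_1 = \|f\|_1$ yields $\|S(t)\xi\|\leq\|T\|(1+\epsilon)\|\xi\|$, hence $\|S(t)\|\leq\|T\|$ uniformly in $t\geq 0$; the semigroup law and strong continuity then transfer directly from the corresponding properties of $(\tau_t)$ on $\L1$.

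Finally, using the $\L1$-valued identity $g*f = \int_0^\infty g(t)\tau_t f\,dt$ together with continuity of $T$, one obtains
$$
\int_0^\infty g(t)\,S(t)(T(f)\eta)\,dt = T(g*f)\,\eta = T(g)(T(f)\eta),
$$
so that $Sg := \int_0^\infty g(t)S(t)\,dt$ agrees with the restriction $T(g)|_{X_0}\in\cL(X_0)$. The equivalence $\|T(g)\|_{\cL(X)}\asymp\|Sg\|_{\cL(X_0)}$ follows from $T(g)x = \lim_\lambda T(g)\,T(u_\lambda)x$ with $T(u_\lambda)x\in X_0$ and $\|T(u_\lambda)x\|\leq\|T\|\|x\|$. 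The principal obstacle is the well-definedness and uniform boundedness of $S(t)$, for which Cohen's factorization theorem in its norm-controlled form is the essential tool; the rest of the argument is a bookkeeping exercise built on the convolution identity for $(\tau_t)$.
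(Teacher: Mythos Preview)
Your argument is correct and complete, but it follows a genuinely different route from the paper's proof. The paper works entirely through the pseudoresolvent: it sets $R(\lambda):=Te_\lambda$, checks the resolvent identity, defines $X_0:=\overline{R(\lambda)X}$, verifies the Hille--Yosida estimate $\|(\lambda R(\lambda))^n\|\le\|T\|$ from $\|\lambda e_\lambda\|_1=1$, shows that $R(\lambda)|_{X_0}$ is injective with dense range and hence is the resolvent of a generator $A$, and only then obtains the semigroup $(S(t))_{t\ge 0}$ from the Hille--Yosida theorem. The identification $Sg=T(g)|_{X_0}$ is then made via uniqueness of the Laplace--Stieltjes transform, and equivalence is proved with essentially the same $u_\lambda$-argument you give.

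By contrast, you bypass Hille--Yosida and Laplace uniqueness entirely: you manufacture the semigroup directly as the transfer of the right-shift semigroup on $\L1$ through the module action, with Cohen's factorization theorem supplying both well-definedness and the uniform bound. The identity $g*f=\int_0^\infty g(t)\,\tau_t f\,dt$ then gives the integral representation in one line. Your reduction of the abstract case via the left regular representation on the unitization $\cA_1$ is also slightly cleaner than the paper's, which first passes to the closure of the range of $T$ to obtain a bounded approximate identity in $\cA$ before embedding $\cA\hookrightarrow\cL(\cA)$. Note that your space $X_0=\overline{T(\L1)X}$ coincides with the paper's $\overline{R(\lambda)X}$, since $\{e_\lambda\}$ is total in $\L1$ and the range of a pseudoresolvent is independent of $\lambda$. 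What the paper's approach buys is an explicit handle on the resolvent and generator, which is the language used throughout the rest of the article; what your approach buys is a shorter path to the semigroup itself and a transparent reason for the representation $Sg=\int S(t)g(t)\,dt$.
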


\begin{proof}
We first assume that $\cA \subset\cL (X)$ as a closed subspace, and we put $R(\lambda ) := Te_\lambda \in \cL (X)$ ($\lambda\in\C_+$). Since $T$ is an algebra homomorphism, the function $R$ is a {\em pseudoresolvent}, that is, 
$$
R(\lambda ) - R(\mu ) = (\mu - \lambda ) R(\lambda ) R(\mu ) \quad \text{for every } \lambda, \, \mu\in\C_+ .
$$
This resolvent identity implies that 
$$
{\rm range}\, R (\lambda ) \text{  is independent of } \lambda\in\C_+ .
$$
We put
$$
X_0 := \overline{{\rm range}\, R(\lambda )}^{\|\cdot\|_X} \subset X .
$$
Clearly, $X_0$ is invariant under $R(\lambda )$, and since $\| \lambda e_\lambda \|_{L^1} = 1$ for every $\lambda >0$, we obtain the estimate
\begin{equation} \label{hy}
\| (\lambda R(\lambda ))^n \|_{\cL (X_0)} \leq \| (\lambda R(\lambda ))^n \|_{\cL (X)}
\leq \| T\| \quad \text{for every } \lambda >0 , \, n\geq 1 .
\end{equation}
By using this estimate (with $n=1$), for every $x\in X$ and every $\lambda\in\C_+$ one obtains
$$
\lim_{\mu\to\infty} \mu R(\mu ) R(\lambda) x = \lim_{\mu\to\infty} \frac{\mu R(\lambda ) x - \mu R(\mu) x }{\mu-\lambda} = R(\lambda )x ,
$$
which implies
$$
\lim_{\mu\to\infty} \mu R(\mu ) x = x \quad \text{for every } x\in X_0 .
$$
This relation and the resolvent identity imply that $R(\lambda )$ is injective on $X_0$ and the range of $R(\lambda )$ is dense in $X_0$. As a consequence, there exists a densely defined, closed operator $A$ on $X_0$ such that 
\begin{equation} \label{risresolvent}
R(\lambda )x = R(\lambda ,A) x \quad \text{for every } \lambda\in\C_+ ,\, x\in X_0 .
\end{equation}
By \eqref{hy} and the Hille-Yosida theorem, $A$ is the generator of a uniformly bounded $C_0$-semigroup $(S(t))_{t\geq 0}\subset \cL (X_0)$. Let $S:\L1 \to \cL (X_0 )$ be the operator defined by $Sg = \int_0^\infty S(t) g(t) \; dt$, where the integral is understood in the strong sense. Let $F\in {\rm Lip}_0 (\R_+ ; \cL (X))$ be the function representing $T$ (Riesz-Stieltjes representation). Then the equality \eqref{risresolvent} and the definition of $R$ imply
$$
\int_0^\infty e^{-\lambda t} \; dF (t)x = \int_0^\infty e^{-\lambda t} S(t)x \; dt \quad \text{for every } \lambda\in\C_+ , \, x\in X_0 .
$$
By the uniqueness of the Laplace-Stieltjes transform, we obtain
$$
\int_0^t S(s) x\; ds = F(t) x  \quad \text{for every } t\geq 0, \, x\in X_0 ,
$$
and hence
$$
(Sg)(x) = (Tg)(x) \quad \text{for every } g\in \L1, \, x\in X_0 .
$$
Clearly, this implies
$$
\| Sg \|_{\cL (X_0)} \leq \| Tg \|_{\cL (X)} \quad \text{for every } g\in \L1 .
$$
On the other hand, for every $x\in X$ one has
\begin{eqnarray*}
\| (Tg) (x) \| & = & \lim_{\mu\to\infty} \| \mu R(\mu ) (Tg) (x) \| \\
& = & \lim_{\mu\to\infty} \| (Sg) (\mu R(\mu ) x) \| \\
& \leq & \| Sg \|_{\cL (X_0 )} \, \sup_{\mu >0} \| \mu R(\mu )x\| \\
& \leq & \| Sg \|_{\cL (X_0 )} \, \| T\| \, \| x\| .
\end{eqnarray*}
The last two inequalities imply that $T$ and $S$ are equivalent. 

The general case can be reduced to the case $\cA\subset \cL (X)$ in the following way. First of all, we may assume without loss of generality that ${\rm range}\, T$ is dense in $\cA$. Since $\L1$ admits a bounded approximate identity, it is then easy to verify that also $\cA$ admits a bounded approximate identity. From this one deduces that the natural embedding 
$$
\cA \to \cL (\cA ) ,
$$
which to every element $a\in\cA$ associates the multiplier $M_a\in\cL (\cA )$ given by $M_a b = ab$, is an isomorphism onto its range. 
\end{proof}

\begin{remark}
It is in general an open problem to give conditions on an algebra homomorphism $T:\L1 \to \cA$ which imply that there exists an equivalent algebra homomorphism $S:\L1 \to \cL (X)$ on a Banach space $X$ having additional properties, for example, being reflexive, being an $L^p$ space etc. If $T$ has dense range, this is essentially the problem of representing $\cA$ as a closed subalgebra of $\cL (X)$. 
\end{remark}

The next lemma relates the norm-continuity problem with the problem of representability of homomorphisms $\L1 \to \cA$.

\begin{lemma} \label{rep-char}
An algebra homomorphism $T :\L1 \to \cA$ is representable if and only if there exists a uniformly bounded and continuous semigroup $(a(t))_{t>0}\subset\cA$ (no continuity condition at zero) such that $T$ is represented by \eqref{Ta}.
\end{lemma}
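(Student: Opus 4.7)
The implication $(\Leftarrow)$ is nearly immediate: if $Tg = \int_0^\infty a(t)g(t)\,dt$ for a uniformly bounded continuous semigroup $(a(t))_{t>0}\subset\cA$, then $f := a$ represents $T$ as an element of $L^\infty(\R_+;\cA)$, and a direct Fubini computation using $a(u+s) = a(u)a(s)$ together with bilinearity and continuity of multiplication in $\cA$ gives $T(g*h) = T(g)T(h)$ for all $g,h\in\L1$. So my attention would be on the forward direction.

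Assume $T$ is a representable algebra homomorphism with representing function $f\in L^\infty(\R_+;\cA)$. The first step is to exploit $T(g*h) = T(g)T(h)$ by expanding both sides via Fubini:
\begin{equation*}
T(g*h) = \int_0^\infty\!\!\int_0^\infty g(u)h(s)\,f(u+s)\,du\,ds,\qquad T(g)T(h) = \int_0^\infty\!\!\int_0^\infty g(u)h(s)\,f(u)f(s)\,du\,ds.
\end{equation*}
Since $g,h$ range over $\L1$, density of the algebraic tensor product $\L1\otimes\L1$ in $L^1(\R_+^2)$, combined with Hahn--Banach separation of $\cA$-valued Bochner functions by $\cA^*$, forces $f(u+s) = f(u)f(s)$ for almost every $(u,s)\in\R_+^2$.

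The second and more delicate step converts this a.e.\ identity into a genuine continuous semigroup. For $\delta,\epsilon>0$ and $t>\delta+\epsilon$, consider the regularization
\begin{equation*}
A_{\delta,\epsilon}(t) := \frac{1}{\epsilon}\int_\delta^{\delta+\epsilon} f(r)f(t-r)\,dr.
\end{equation*}
By the a.e.\ semigroup identity (applied with Fubini to the $(r,t-r)$ decomposition), $A_{\delta,\epsilon}(t) = f(t)$ for almost every $t>\delta+\epsilon$. On the other hand the estimate
\begin{equation*}
\|A_{\delta,\epsilon}(t')-A_{\delta,\epsilon}(t)\| \leq \frac{\|f\|_\infty}{\epsilon}\int_{t-\delta-\epsilon}^{t-\delta}\|f(u+(t'-t))-f(u)\|\,du
\end{equation*}
tends to zero as $t'\to t$ by the standard continuity of translation in $L^1(I;\cA)$ for any bounded interval $I$, so $A_{\delta,\epsilon}$ is continuous in $t$. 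Continuous functions agreeing almost everywhere coincide, so different parameter choices patch together to yield a continuous $a:(0,\infty)\to\cA$ with $a = f$ a.e.\ and $\|a(t)\|\leq\|f\|_\infty$ for every $t>0$. The semigroup law $a(s+t)=a(s)a(t)$ then holds for all $s,t>0$ by continuity of $a$ and of multiplication in $\cA$, and clearly $Tg=\int_0^\infty a(t)g(t)\,dt$.

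The main obstacle is the second step. Since $\cA$ is not assumed to have inverses, one cannot simply ``divide'' to extract $f(t)$ from a product like $f(r)f(t-r)$; the regularization $A_{\delta,\epsilon}$ sidesteps this by averaging over the one-parameter family of decompositions $t = r+(t-r)$, exchanging the missing algebraic structure for the translation continuity of $L^1$.
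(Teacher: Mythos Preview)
Your argument is correct, but it takes a genuinely different route from the paper's. The paper invokes the structural Lemma~\ref{equivalenthomomorphism} to pass to an equivalent homomorphism $S:\L1\to\cL(\cA)$ represented by a bounded $C_0$-semigroup $(S(t))_{t\ge 0}$; since $S$ is equivalent to $T$ and hence also representable, the function $t\mapsto S(t)$ is measurable in $\cL(\cA)$, and the classical Hille--Phillips theorem \cite[Theorem 9.3.1]{HiPh57} then gives immediate norm-continuity of $(S(t))_{t>0}$, which transfers back to $(a(t))_{t>0}$ via the bounded approximate identity of $\cA$. Your approach bypasses this entirely: you extract the a.e.\ functional equation $f(u+s)=f(u)f(s)$ directly from the homomorphism identity, and then run what is essentially a hands-on proof of the Hille--Phillips continuity result via the averaging $A_{\delta,\epsilon}$. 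The trade-off is that the paper's proof is shorter once Lemma~\ref{equivalenthomomorphism} and the Hille--Phillips theorem are available, whereas yours is self-contained and does not require the detour through the multiplier representation on $\cL(\cA)$ or the existence of a bounded approximate identity.

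Two minor remarks. First, in the $(\Leftarrow)$ direction the lemma already \emph{assumes} $T$ is an algebra homomorphism, so you need only observe that a bounded continuous $a$ lies in $L^\infty(\R_+;\cA)$; the Fubini verification of multiplicativity is correct but unnecessary. Second, your Step~1 conclusion that $f(u+s)=f(u)f(s)$ a.e.\ in $\cA$ (rather than merely after pairing with each $\psi\in\cA^*$) uses that $f$ is strongly measurable, hence essentially separably valued, so a countable separating family of functionals suffices; this is implicit in your Hahn--Banach remark but worth making explicit.
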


\begin{proof}  
The sufficiency part is trivial. 

So assume that $T$ is representable by some  $a \in L^{\infty}(\mathbb R_+, \cA)$. Without loss of generality we may assume that $T$ has dense range in $\cA$. By the proof of Lemma \ref{equivalenthomomorphism}, there exists an equivalent algebra homomorphism $S:\L1 \to \cL (\cA )$ which is (strongly) represented by a uniformly bounded $C_0$-semigroup $(S(t))_{t\geq 0}\subset\cL (\cA )$. Moreover,
$$
\int_0^\infty a(t)b \, g(t) \; dt = \int_0^\infty S(t)b \, g(t) \; dt \quad \text{for every } b\in\cA, \, g\in \L1 ,
$$
which in turn implies
$$
a(t) b = S(t) b \quad \text{for every } b\in\cA \text{ and almost every } t>0 .
$$
As a consequence, after changing $a$ on a set of measure zero, $(a(t))_{t>0}$ is a semigroup. 

Since $S$ is also representable, the semigroup $(S(t))_{t\geq 0}$ is measurable in $\cL (\cA )$, and hence immediately norm continuous by \cite[Theorem 9.3.1]{HiPh57}. Since $\cA$ admits a bounded approximate identity, one thus obtains that also $(a(t))_{t>0}$ is norm continuous.
\end{proof}

\begin{remark}
One can also prove that an algebra homomorphism $T:\L1 \to \cA$ is weakly compact if and only if it is represented by a uniformly bounded and continuous semigroup $(a(t))_{t\geq 0}\subset \cA$ (continuity at $0$ included!); compare with \cite{Ga92}, \cite{GaRaWh92}.
\end{remark}

Let $T:\L1 \to \cL (X)$ be an algebra homomorphism which is represented in the strong sense by a uniformly bounded $C_0$-semigroup $(T(t))_{t\geq 0}$ with generator $A$. By Lemma \ref{rep-char} above, $(T(t))_{t\geq 0}$ is immediately norm continuous if and only if $T$ is representable. By Theorem \ref{rl-char} and Corollary \ref{rlcharls}, the resolvent of $A$ satisfies the resolvent decay condition \eqref{res-cond} if and only if $T$ is Riemann-Lebesgue. Hence, by Lemma \ref{equivalenthomomorphism}, the norm-continuity problem can be reformulated in the following way.

\begin{problem}[Norm-continuity problem reformulated] \label{problemnc1} 
If $\cA$ is a Banach algebra and if $T:\L1 \to \cA$ is a Riemann-Lebesgue algebra homomorphism, is $T$ representable?
\end{problem}

We recall from the Introduction, that the norm-continuity problem has a negative answer in general, but that there are some positive answers in special cases. For example, by the representation theorem for $C^*$ algebras as subalgebras of $\cL (H)$ ($H$ a Hilbert space), and by Lemma \ref{equivalenthomomorphism}, the answer to Problem \ref{problemnc1} is positive if $\cA$ is a $C^*$ algebra. This follows from the result in \cite{Yo92}. 

The fact that the answer to Problem \ref{problemnc1} is in general negative follows from Matrai's example \cite{Ma08}. The aim of the following section is to construct suitable Riemann-Lebesgue homomorphisms and to deduce from this different counterexamples to Problem \ref{problemnc1} for which it is possible to control the resolvent decay along vertical lines. \\

At the same time, we are not able to answer the following variant of the norm-continuity problem. Observe that since Problem \ref{problemnc1} has in general a negative answer, this variant and Problem \ref{problemrldp} are not independent of each other.

\begin{problem}[Variant of the norm-continuity problem] 
If $\cA$ is a Banach algebra and if $T:\L1 \to \cA$ is a local Dunford-Pettis algebra homomorphism, is $T$ representable?
\end{problem}

We finish this section by collecting some basic properties of algebra homomorphisms $\L1\to \cA$ and their adjoints which are needed in the sequel.\\

For every $g\in \L1$, $h\in L^\infty (\R_+ )$ we define the {\em adjoint convolution} $g\circledast h \in L^\infty (\R_+ )$ by
$$
(g\circledast h)\, (t) = \int_0^\infty g(s) h(t+s) \; ds .
$$
With this definition, for every $f$, $g\in\L1$ and every $h\in L^\infty (\R_+ )$ we have the identities
$$
f\circledast (g\circledast h) = (f*g)\circledast h 
$$
and
$$
\langle f*g , h \rangle_{L^1 ,L^\infty} = \langle f , g\circledast h \rangle_{L^1 ,L^\infty} ,
$$
which will be frequently used in the following. The second identity explains the name of the product $\circledast$. From this identity one can also deduce that $\circledast$ is separately continuous on $(L^1 , {\rm weak})\times (L^\infty , {\rm weak}^*)$ with values in $(L^\infty , {\rm weak}^*)$.\\

Whenever $X$ is some Banach space, we denote by $B_X$ the closed unit ball in $X$.

\begin{lemma} \label{necessary-cond}
Let $T:\L1 \to \cA$ be an algebra homomorphism. Then the following are true:
\begin{itemize}
\item[(a)] The set $T^* B_{\cA^*} \subset L^\infty (\R_+ )$ is non-empty, convex, weak$^*$ compact and $T^* B_{\cA^*} = - T^* B_{\cA^*}$.
\item[(b)] If $\| T\| = 1$, then for every $g\in B_{L^1}$ and every $h\in T^* B_{\cA^*}\subset L^\infty (\R_+ )$ one has $g\circledast h\in T^* B_{\cA^*}$. In particular, ${\rm range}\, T^*$ is invariant under adjoint convolution.
\item[(c)] If $T$ is representable, then ${\rm range}\, T^* \subset C(0,\infty )$.
\item[(d)] If $T$ is represented (in the strong sense) by a bounded $C_0$-semigroup which is norm-continuous for $t>t_0$, then every function in  ${\rm range}\, T^*$ is continuous on $(t_0,\infty )$.
\end{itemize}
\end{lemma}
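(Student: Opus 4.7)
My plan is to take the four parts in order, since (c) and (d) will rely on the main output of the earlier computations.

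For part (a), I would simply invoke standard weak$^*$ topology facts. The adjoint $T^*:\cA^*\to L^\infty(\R_+)$ is bounded and weak$^*$-to-weak$^*$ continuous. Since $B_{\cA^*}$ is weak$^*$ compact by Banach--Alaoglu, its image $T^*B_{\cA^*}$ is weak$^*$ compact in $L^\infty(\R_+)$, and it is convex and symmetric because $B_{\cA^*}$ is. It contains $0 = T^*0$, so it is non-empty.

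For part (b) the key observation is the homomorphism identity $T(f*g)=Tf\cdot Tg$. Given $h=T^*\phi$ with $\phi\in B_{\cA^*}$ and $g\in B_{L^1}$, I would define $\psi\in\cA^*$ by $\psi(a):=\phi(a\cdot Tg)$ and check $\|\psi\|\le \|\phi\|\,\|Tg\|\le \|T\|\,\|g\|_{L^1}\le 1$ under the hypothesis $\|T\|=1$. Then for arbitrary $f\in\L1$ I would compute
\[
\langle f, T^*\psi\rangle = \psi(Tf) = \phi(Tf\cdot Tg) = \phi(T(f*g)) = \langle f*g, T^*\phi\rangle = \langle f, g\circledast h\rangle,
\]
using the adjoint-convolution identity from the paragraph preceding the lemma. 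This gives $g\circledast h = T^*\psi \in T^*B_{\cA^*}$. For the last assertion, any element of $\mathrm{range}\,T^*$ is a scalar multiple of one in $T^*B_{\cA^*}$, so invariance under $g\circledast(\cdot)$ is immediate by scaling.

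For part (c), I would apply Lemma \ref{rep-char}: if $T$ is representable, then $T$ is represented by a uniformly bounded, continuous semigroup $(a(t))_{t>0}\subset\cA$, i.e.\ $Tg = \int_0^\infty a(t)g(t)\,dt$ as a Bochner integral in $\cA$. Then for $\phi\in\cA^*$ and $g\in\L1$,
\[
\langle g, T^*\phi\rangle = \phi(Tg) = \int_0^\infty \phi(a(t))\,g(t)\,dt,
\]
so $T^*\phi$ has the continuous representative $t\mapsto\phi(a(t))$ on $(0,\infty)$.

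For part (d) I would repeat the argument of (c) locally. On the interval $(t_0,\infty)$ the semigroup $(T(t))$ is norm-continuous and bounded, so $t\mapsto T(t)g(t)$ is Bochner integrable in $\cL(X)$ for every $g\in L^1$ supported in $(t_0,\infty)$, and the Bochner integral $\int T(t)g(t)\,dt$ agrees with $Tg$ (test against evaluation functionals $A\mapsto Ax$). Hence for any $\phi\in\cL(X)^*$,
\[
\langle g, T^*\phi\rangle = \phi(Tg) = \int_{t_0}^\infty \phi(T(t))\,g(t)\,dt
\]
for all such $g$, which means $T^*\phi$ coincides a.e.\ on $(t_0,\infty)$ with the function $t\mapsto\phi(T(t))$, which is continuous there. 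The only real subtlety is justifying the identification of the strong integral defining $Tg$ with a Bochner integral in $\cL(X)$ on $(t_0,\infty)$, for which norm continuity on this range, together with boundedness, delivers strong measurability and integrability; this is where the hypothesis is used and is the one step requiring care.
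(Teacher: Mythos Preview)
Your proposal is correct and follows essentially the same route as the paper: (a) via Banach--Alaoglu and weak$^*$-continuity of $T^*$, (b) by the duality computation $g\circledast T^*a^* = T^*(Tg\cdot a^*)$, (c) by invoking Lemma~\ref{rep-char} and pairing with the continuous representing semigroup, and (d) by restricting the same argument to $(t_0,\infty)$. The only cosmetic difference is that the paper phrases (d) as ``similar to (c) using the direct sum $L^1(\R_+)=L^1(0,t_0)\oplus L^1(t_0,\infty)$'' rather than spelling out the Bochner-integrability justification you give, but the content is identical.
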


\begin{proof}
The properties in (a) are actually true for general bounded linear operators $T$ and do not depend on the spaces $\L1$ and $\cA$. The weak$^*$ compactness follows from Banach-Alaoglu and the other properties are true for any unit ball in a Banach space.

In order to prove (b), let $g\in B_{L^1}$ and $h = T^* a^* \in T^* B_{\cA^*}$ for some $a^*\in B_{\cA^*}$. Since $T$ is an algebra homomorphism, for every $f\in\L1$,
\begin{eqnarray*}
\langle f , g\circledast T^* a^* \rangle_{L^1,L^\infty} & = & \langle f*g , T^* a^* \rangle_{L^1 ,L^\infty} \\
& = & \langle Tf \, Tg , a^* \rangle_{\cA , \cA^*} \\
& =: & \langle Tf , Tg \, a^* \rangle_{\cA , \cA^*} ,
\end{eqnarray*}
so that $g\circledast T^* a^* = T^* (Tg \, a^*)$. However, $\| Tg \, a^*\|_{\cA^*} \leq \| Tg \|_{\cA} \, \| a^*\|_{\cA^*} \leq 1$, so that (b) is proved.

If $T$ is representable, then, by Lemma \ref{rep-char}, there exists a bounded norm-continuous semigroup $(a(t))_{t>0} \subset \cA$ such that 
$$
Tg = \int_0^\infty a(t) g(t) \; dt , \quad g\in\L1 .
$$
Hence, for every $a^*\in\cA^*$ and every $g\in \L1$,
\begin{eqnarray*}
\langle g , T^* a^* \rangle_{L^1 ,L^\infty } & = & \langle Tg , a^* \rangle_{\cA ,\cA^*} \\
& = & \int_0^\infty g(t) \langle a(t) , a^*\rangle_{\cA , \cA^*} \; dt .
\end{eqnarray*}
This implies $T^* a^* = \langle a(\cdot ) , a^*\rangle_{\cA , \cA^*} \in C(0,\infty )$ so that (c) is proved.

The last assertion is very similar to (c), if we use in addition that $\L1$ is the direct sum of $L^1 (0,t_0 )$ and $L^1 (t_0,\infty )$ (and $L^\infty (\R_+ )$ is the direct sum of the corresponding duals).
\end{proof}

\section{Construction of Riemann-Lebesgue homomorphisms}

This section is devoted to the main result of this article: we will describe a procedure how to construct Riemann-Lebesgue algebra homomorphisms $T:\L1 \to \cA$ for which one can estimate the norm decay of the pseudoresolvent $( Te_\lambda )_{\lambda\in\C_+}$ along vertical lines.\\

In the following, for every function $\varphi\in C(\R_+ )$ we define the domain
$$
\Sigma_\varphi := \{ \lambda\in\C : \Re \lambda > - \varphi (| \Im \lambda |) \} .
$$
The domains $\Sigma_\varphi$ are symmetric with respect to the real axis. Domains of the form $\Sigma_\varphi$ with $\lim_{\beta \to\infty} \varphi (\beta ) = \infty$ play an important role in connection with Riemann-Lebesgue algebra homomorphisms. The following proposition contains a necessary condition for algebra homomorphisms to be Riemann-Lebesgue. 

\begin{proposition} \label{necessary2}
If $\cA$ is a Banach algebra and if $T:\L1\to\cA$ is a Riemann-Lebesgue algebra homomorphism, then there exists a function $\varphi \in C(\R_+ )$ satisfying $\lim_{\beta \to \infty} \varphi (\beta ) = \infty$ such that for every $f\in {\rm range}\, T^*$ the
Laplace transform $\hat{f}$ extends to a bounded analytic function on $\Sigma_\varphi$. 
\end{proposition}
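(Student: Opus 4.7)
The plan is to extend the pseudoresolvent $R(\lambda) := Te_\lambda$ analytically beyond $\C_+$ by a Neumann-series argument powered by the Riemann--Lebesgue decay $M(\beta) := \|R(1+i\beta)\| \to 0$ (which follows from Theorem \ref{rl-char}), and then to recover the extension of $\hat f$ for $f\in{\rm range}\,T^*$ by pairing against the defining functional.

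Iterating the pseudoresolvent identity $R(\lambda) = R(\mu)+(\mu-\lambda)R(\lambda)R(\mu)$ with $\mu = 1+i\beta$, the series
$$
R_\beta(\lambda) := \sum_{n\geq 0}(1+i\beta-\lambda)^n\,R(1+i\beta)^{n+1}
$$
converges absolutely in $\cA$ on the disc $D_\beta := B(1+i\beta,\, 1/(2M(\beta)))$, agrees there with $R|_{\C_+}$ (the iteration remainder $(\mu-\lambda)^n R(\lambda)R(\mu)^n$ vanishes in the limit), and is bounded in norm by $2M(\beta) \leq 2\|T\|$. Any two discs $D_\beta, D_{\beta'}$ whose intersection is non-empty also meet along a segment of the vertical line $\Re\lambda = 1 \subset \C_+$, where both candidate extensions equal $R$; by the identity principle they agree throughout the connected set $D_\beta\cap D_{\beta'}$. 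The pieces therefore glue into a single bounded analytic $\cA$-valued extension of $R$ on $\Omega := \C_+ \cup \bigcup_{\beta\in\R} D_\beta$, with $\|R(\lambda)\| \leq 2\|T\|$ throughout.

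A point $\alpha + i\tau$ belongs to $D_\tau$ exactly when $\alpha > 1 - 1/(2M(\tau))$, so the function $\psi(s) := \tfrac12\min(M(s)^{-1},M(-s)^{-1}) - 1$ tends to $\infty$ and governs how far into the left half-plane $\Omega$ reaches at imaginary part $\pm s$. Since $\psi(s)\to\infty$, so does its non-decreasing envelope $s\mapsto\inf_{s'\geq s}\psi(s')$, and one can pick any continuous $\varphi\in C(\R_+)$ with $0\leq\varphi\leq\psi$ and $\varphi(s)\to\infty$ (for example, a piecewise-linear minorant of the envelope). By construction $\Sigma_\varphi\subset\Omega$.

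Finally, for $f = T^*a^*\in{\rm range}\,T^*$ and $\Re\lambda>0$,
$$
\hat f(\lambda) = \langle e_\lambda,\, T^*a^*\rangle_{L^1,L^\infty} = \langle R(\lambda),\, a^*\rangle_{\cA,\cA^*},
$$
and the bounded analytic extension of $R$ on $\Sigma_\varphi$ produces one for $\hat f$ with $|\hat f(\lambda)|\leq 2\|T\|\,\|a^*\|_{\cA^*}$. The substantive step is the Neumann-series extension, which rests entirely on $M(\beta)\to 0$; the only mildly technical point is extracting a genuinely \emph{continuous} $\varphi$ from the possibly irregular $\psi$, but this is a routine real-variable construction that does not interfere with the main line of argument.
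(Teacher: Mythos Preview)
Your proof is correct and follows essentially the same route as the paper, which also expands the pseudoresolvent $\lambda\mapsto Te_\lambda$ in a power (Neumann) series around the points on a fixed vertical line and uses the Riemann--Lebesgue decay to obtain discs of unbounded radii on which the extension is bounded. The paper's proof is a two-line sketch; your version simply supplies the details (the gluing via the identity principle and the extraction of a continuous $\varphi$) that the paper leaves implicit.
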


\begin{proof}
Assume that $T:\L1\to \cA$ is a Riemann-Lebesgue homomorphism. Then $\lim_{|\beta|\to\infty} \| Te_{2+i\beta}\| = 0$. Expanding the pseudoresolvent $\lambda \mapsto Te_\lambda$ in a power series at the points $2+i\beta$ with $\beta\in\R$, one easily verifies that this pseudoresolvent extends to a bounded analytic function in some domain $\Sigma_\varphi$, where $\varphi$ is as in the statement. 

If $f =T^* a^*\in {\rm range}\, T^*$, then for every $\lambda\in\C_+$ one has
\begin{eqnarray*}
\hat{f} (\lambda ) & = & \langle e_\lambda , f \rangle_{L^1,L^\infty} \\
& = & \langle Te_\lambda , a^* \rangle_{\cA , \cA^*} ,
\end{eqnarray*}
and therefore the Laplace transform $\hat{f}$ extends to a bounded analytic function on $\Sigma_\varphi$.
\end{proof}

The main result in this section goes in the opposite direction to Proposition \ref{necessary2}. Throughout the following, we put for every $\beta\in\R$
$$
\lambda_\beta := 2+i\beta ,
$$
and if $\varphi \in C(\R_+ )$ is a given nonnegative function, then we also put
$$
d_\beta := {\rm dist}\, (\lambda_\beta , \partial \Sigma_\varphi ) .
$$
It will not be necessary to make the dependence of $d_\beta$ on $\varphi$ explicit in the notation since the function $\varphi$ will always be clear from the context.

\begin{theorem} \label{main}
Let $f\in L^\infty (\R_+ )$ be a function such that its Laplace transform $\hat{f}$ extends to a bounded analytic function in some domain $\Sigma_\varphi$, where $\varphi \in C(\R_+ )$ satisfies $\inf \varphi >0$. 

Then there exists a Banach space $X$ which embeds continuously into $L^\infty (\R_+ )$ and which is left-shift invariant such that 
\begin{itemize}
\item[(i)] the left-shift semigroup $(T(t))_{t\geq 0}$ on $X$ is bounded and strongly continuous,
\item[(ii)] the resolvent of the generator $A$ satisfies the decay estimate
\begin{equation} \label{decayt}
\| R(\lambda_\beta , A)\|_{\cL (X)} \leq C\, \frac{\log d_\beta}{d_\beta} \quad \text{for every } \beta\in\R ,
\end{equation}
\item[(iii)] if $T :\L1\to \cL (X)$ is the algebra homomorphism which is represented (in a strong sense) by $(T(t))_{t\geq 0}$, then $f\in {\rm range}\, T^*$, and
\item[(iv)] the following inclusion holds:
$$
X\subset \overline{\L1 \circledast f}^{(L^\infty , {\rm weak}^*)}, 
$$
\end{itemize}
If, in addition, the function 
\begin{eqnarray*}
\C_+ &\to& L^\infty (\R_+ ), \\
\lambda &\mapsto& e_\lambda\circledast f ,
\end{eqnarray*}
extends analytically to $\Sigma_\varphi$ and if there exists some $r\in (0,1)$ such that
\begin{equation} \label{decayf}
\sup_{\lambda\in B(\lambda_\beta , r \,d_\beta)} \| e_\lambda\circledast f\|_\infty \leq C \, \frac{1}{d_\beta} \quad \text{for every } \beta\in\R .
\end{equation}
then the space $X$ can be chosen in such a way that the resolvent satisfies the stronger estimate 
$$
\| R(\lambda_\beta , A)\|_{\cL (X)} \leq C\, \frac{1}{d_\beta} \quad \text{for every } \beta\in\R .
$$
\end{theorem}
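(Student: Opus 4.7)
The plan is to construct $X$ as a Banach space of bounded functions on $\R_+$, containing $L^1 \circledast f$ densely, equipped with a norm that both dominates $\|\cdot\|_\infty$ (so that $X \hookrightarrow L^\infty(\R_+)$) and makes the pseudoresolvent of the left-shift semigroup, namely $g \mapsto e_\lambda \circledast g$, decay along vertical lines as prescribed. A natural first attempt is to set
$$
\|g\|_X := \|g\|_\infty + \sup_{\beta \in \R} \frac{d_\beta}{\log d_\beta} \, \|e_{\lambda_\beta} \circledast g\|_\infty,
$$
with $d_\beta$ replaced by $d_\beta/\log d_\beta \to d_\beta$ in the stronger case. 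As the resolvent identity $e_\mu \circledast e_\lambda = (\mu - \lambda)^{-1}(e_\lambda - e_\mu)$ will typically not yield the required product-weight bound from a single-weight norm, one probably has to enlarge the norm to an iterated supremum over all finite compositions $e_{\lambda_{\beta_1}}\circledast\cdots\circledast e_{\lambda_{\beta_n}}\circledast g$, suitably weighted, so that the resolvent estimate is essentially built into the definition. The space $X$ is then the completion of $L^1 \circledast f$ in this norm.

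With $X$ defined, items (i), (iii), (iv) should follow relatively directly. The embedding $X \hookrightarrow L^\infty$ is built in, and shift invariance holds because left translation commutes with $\circledast e_\lambda$. Strong continuity of $\TT$ reduces, by density, to elements $h\circledast f$ with $h\in L^1$, for which $T(t)(h\circledast f) = (\text{left-translate of }h) \circledast f$ converges uniformly by continuity of translation in $L^1$ and boundedness of $f$. Condition (iv) is immediate from density of $L^1 \circledast f$ and weak$^*$-closedness. For (iii), let $(h_\alpha) \subset L^1$ be a bounded approximate identity, so that $h_\alpha \circledast f \to f$ weak$^*$ in $L^\infty$ and the net is bounded in $X$; define $\Phi\in \cL(X)^*$ roughly by $\Phi(U) := \lim_\alpha (Uh_\alpha \circledast f)(0)$ (made rigorous via Hahn--Banach on an appropriate subspace), and check $T^* \Phi = f$ using that $T(t)(h_\alpha \circledast f)(0) \to f(t)$.

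The complex-analytic heart of the proof is the resolvent decay (ii). One must show $\|e_{\lambda_\beta}\circledast g\|_X \leq C(\log d_\beta / d_\beta)\|g\|_X$ for $g\in X$. The $L^\infty$-part is immediate from the definition; the supremum part uses that the $X$-norm on $L^1 \circledast f$ encodes analyticity of Laplace transforms on $\Sigma_\varphi$, so Cauchy's integral formula on the disk $B(\lambda_\beta, d_\beta) \subset \Sigma_\varphi$ controls derivatives in $\lambda$ and permits passing from $\hat g$-bounds inside $\Sigma_\varphi$ to operator bounds on vertical lines; the log factor arises in the standard way from integrating over concentric circles or summing a geometric family of dyadic annuli. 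Under the stronger hypothesis \eqref{decayf}, the bound $\|e_\lambda\circledast f\|_\infty \leq C/d_\beta$ already holds on a whole disk $B(\lambda_\beta, r d_\beta)$, so a single application of the maximum modulus principle (or Cauchy's formula on that disk) gives the improved decay without the logarithmic loss.

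The \textbf{main obstacle} I expect is designing the norm on $X$ so that all of (a) finiteness on $L^1 \circledast f$ (ensured by the hypothesis on $\hat f$ and, under the improved hypothesis, by \eqref{decayf}), (b) domination of $\|\cdot\|_\infty$, and (c) the single-weight resolvent estimate hold simultaneously, with no spurious constant blowup. Verifying that the iterated norm is indeed equivalent, on the dense subspace $L^1\circledast f$, to one in which the resolvent contracts by the correct weight will require a careful combinatorial bookkeeping of how the analytic extension of $\hat f$ propagates through convolutions with $e_{\lambda_\beta}$.
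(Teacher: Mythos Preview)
Your overall strategy---building the norm on $X$ from an iterated, weighted supremum over all compositions $e_{\lambda_{\beta_1}}\circledast\cdots\circledast e_{\lambda_{\beta_n}}\circledast g$---is exactly the paper's approach, and you correctly isolate the main obstacle. Two points, however, separate your sketch from a proof.

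First, defining $X$ as the \emph{completion} of $L^1\circledast f$ is problematic: abstract completion need not stay inside $L^\infty(\R_+)$, yet you need $X\hookrightarrow L^\infty$ for (iv) and for the semigroup to be the left shift. The paper avoids this by building the unit ball $B_M$ as the weak$^*$ closure in $L^\infty$ of $\{\sum g_{\bf k}\circledast f_{\bf k}:(g_{\bf k})\in B_{l^1(L^1)}\}$, where the $f_{\bf k}$ are precisely the normalized iterated resolvents of $f$; then $M=\R_+B_M$ with the Minkowski norm is automatically a dual Banach space sitting inside $L^\infty$ (Dixmier), and $X$ is obtained as a closed subspace via the usual pseudoresolvent-to-semigroup lemma. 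Second, the ``combinatorial bookkeeping'' you anticipate is the entire technical content, and the mechanism you propose for the log factor is not the right one. The base estimate $\|e_{\lambda_\beta}\circledast f\|_\infty\le C\log d_\beta/d_\beta$ comes from the \emph{two-constants theorem} (harmonic measure on a rectangle), interpolating between the trivial $C/\mathrm{Re}\,\lambda$ bound on $\C_+$ and the uniform bound on $\Sigma_\varphi$; it is not a concentric-circles or dyadic-annuli argument. The induction on the length of ${\bf k}=(k_1,\dots,k_{n+1})$ then splits into two cases: if some pair $\lambda_{k_\nu},\lambda_{k_\mu}$ is far apart relative to the weights, the resolvent identity collapses the product and the induction hypothesis applies; if all the $\lambda_{k_\nu}$ cluster near $\lambda_{k_1}$, one uses the contour formula $e_{\lambda_{k_1}}*\cdots*e_{\lambda_{k_{n+1}}}\circledast f=\frac{(-1)^{n+2}}{2\pi i}\int_{\Gamma}\frac{e_\lambda\circledast f}{\prod_\nu(\lambda-\lambda_{k_\nu})}\,d\lambda$ over a circle of radius $\sim d_{k_1}/\log d_{k_1}$ together with the base estimate on that circle. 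Without this dichotomy the induction does not close. Your route to (iii) via evaluation functionals is also shakier than necessary; the paper simply shows $\|Sg\|_{M^*}\le\|S\|\,\|Tg\|_{\cL(M)}$ for the natural pairing $S:L^1\to M^*$, factors $S=RT$, and reads off $f\in M=\mathrm{range}\,S^*\subset\mathrm{range}\,T^*$.
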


\begin{remark}
The condition $\inf \varphi >0$ in the above theorem simplifies the proof in some places but is not essential. Moreover, it can always be achieved by rescaling the function $f$ or the semigroup $(T(t))_{t\geq 0}$.
\end{remark}

The important points in the above theorem are the statements that the resolvent decay condition \eqref{res-cond} is satisfied as soon as $\lim_{\beta\to\infty} \varphi (\beta ) = \infty$, and that at the same time $f\in {\rm range}\, T^*$.

Thus, if we are able to find a function $f\in L^\infty (\R_+ )$ such that its Laplace transform $\hat{f}$ extends to a bounded analytic function on $\Sigma_\varphi$, where $\varphi\in C(\R_+ )$ satisfies $\lim_{\beta\to\infty} \varphi (\beta ) = \infty$, and such that $f$ is {\em not} continuous on $(0,\infty )$, then the Riemann-Lebesgue operator from Theorem \ref{main} is not representable by Lemma \ref{necessary-cond} (c), that is, the semigroup $(T(t))_{t\geq 0}$ is not immediately norm continuous (Lemma \ref{rep-char}). In other words, the existence of such a function $f$ solves the norm-continuity problem. It is straightforward to check that the characteristic function $f=1_{[0,1]}$ provides such an example. This and another example will be discussed in Section \ref{sectionexamples}.

For these examples it will be of substantial interest that Theorem \ref{main} also gives an estimate of the resolvent $R(\cdot ,A)$ along vertical lines, in terms of the Laplace transform $\hat{f}$, the decay of the function $\lambda\mapsto e_\lambda\circledast f$ and the growth of the function $\varphi$. We point out that a decay condition weaker than \eqref{decayf} is always true, as we will prove in Lemma \ref{conditiongamma} below. We do not know whether the decay condition \eqref{decayf} is always satisfied.\\

The rest of this section will be devoted to the proof of Theorem \ref{main}, that is, to the construction of the Banach space $X$ and the algebra homomorphism $T :\L1 \to \cL (X)$. The space $X$ will be a closed subspace of an appropriate Banach space $M$ which is continuously embedded into $L^\infty (\R_+ )$; we will first construct $M$ by constructing its unit ball.

\begin{lemma} \label{l1}
Let $(f_n)\subset L^\infty (\R_+ )$ be a bounded sequence and define the set
\begin{equation} \label{bm}
B_M := \overline{\big\{ \sum_n g_n \circledast f_n : (g_n)\in B_{l^1 (\L1 )} \big\} }^{(L^\infty , {\rm weak}^*)} \subset L^\infty (\R_+ ).
\end{equation}
Then:
\begin{itemize}
\item[(a)] The set $B_M$ is non-empty, convex, weak$^*$ compact and $B_M = - B_M$.
\item[(b)] For every $g\in B_{L^1}$ and every $h\in B_M$ one has $g\circledast h\in B_M$.
\item[(c)] For every $n$ one has $f_n\in B_M$. 
\end{itemize}
\end{lemma}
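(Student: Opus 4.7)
The plan is to verify each clause in turn, getting (c) essentially for free from an approximate identity argument, (a) via Banach--Alaoglu after a bound in $L^\infty$, and (b) by combining the weak$^*$-continuity of $g\circledast\cdot$ with the associativity identity $g\circledast(h\circledast f)=(g*h)\circledast f$.

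First I would establish a uniform bound on the presumptive unit ball. Set $C:=\sup_n \|f_n\|_\infty<\infty$. For any $(g_n)\in B_{l^1(\L1)}$, the series $\sum_n g_n\circledast f_n$ converges absolutely in $L^\infty(\R_+)$ with norm at most $C$ since $\|g_n\circledast f_n\|_\infty\leq \|g_n\|_{L^1}\|f_n\|_\infty$ and $\sum_n\|g_n\|_{L^1}\leq 1$. Hence the pre-closure set sits inside $C\cdot B_{L^\infty}$. Convexity of the pre-closure set is clear from convexity of $B_{l^1(\L1)}$, and symmetry $B_M=-B_M$ follows from $-B_{l^1(\L1)}=B_{l^1(\L1)}$. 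These properties persist under weak$^*$ closure. Non-emptiness is trivial (take $g_n\equiv 0$), and weak$^*$ compactness follows because $B_M$ is a weak$^*$-closed subset of $C\cdot B_{L^\infty}$, which is weak$^*$ compact by Banach--Alaoglu. That proves (a).

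For (c), I would approximate $f_n$ weak$^*$ by $\lambda e_\lambda\circledast f_n$ as $\lambda\to\infty$, using the sequence $(g_k)$ with $g_n=\lambda e_\lambda$ and $g_k=0$ for $k\neq n$, which lies in $B_{l^1(\L1)}$ since $\|\lambda e_\lambda\|_{L^1}=1$. To check weak$^*$ convergence, for $\phi\in\L1$ Fubini gives $\langle\phi,\lambda e_\lambda\circledast f_n\rangle=\int_0^\infty \lambda e^{-\lambda s}\langle \tau_s\phi,f_n\rangle\,ds$ where $\tau_s\phi$ is the right-translate of $\phi$ by $s$ (extended by zero on $[0,s)$); strong continuity of translation in $\L1$ combined with the fact that $(\lambda e^{-\lambda s})_{\lambda\to\infty}$ is a Dirac family at $0^+$ yields the limit $\langle\phi,f_n\rangle$. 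Since $B_M$ is weak$^*$-closed, $f_n\in B_M$.

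For (b), fix $g\in B_{L^1}$ and $h\in B_M$. Write $h=\text{weak}^*\text{-}\lim_\alpha h_\alpha$ with $h_\alpha=\sum_n g_{n,\alpha}\circledast f_n$ and $(g_{n,\alpha})\in B_{l^1(\L1)}$. The identity $g\circledast(g_{n,\alpha}\circledast f_n)=(g*g_{n,\alpha})\circledast f_n$ (stated just before the lemma) and absolute convergence in $L^\infty$-norm of both sides let us interchange summation and $g\circledast\cdot$ to obtain $g\circledast h_\alpha=\sum_n (g*g_{n,\alpha})\circledast f_n$. The new coefficient family $(g*g_{n,\alpha})_n$ satisfies $\sum_n\|g*g_{n,\alpha}\|_{L^1}\leq \|g\|_{L^1}\sum_n\|g_{n,\alpha}\|_{L^1}\leq 1$, so it lies in $B_{l^1(\L1)}$ and consequently $g\circledast h_\alpha$ lies in the pre-closure set, hence in $B_M$. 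Separate weak$^*$-continuity of $\circledast$ in the second variable (recalled in the paragraph before the lemma) gives $g\circledast h_\alpha\to g\circledast h$ weak$^*$, and weak$^*$-closedness of $B_M$ finishes the argument.

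The only mildly subtle point I anticipate is the weak$^*$ convergence $\lambda e_\lambda\circledast f_n\to f_n$ in step (c); everything else is bookkeeping with the two stated identities for $\circledast$ together with Banach--Alaoglu. There is no need to invoke any special structure of the particular sequence $(f_n)$ beyond its boundedness, which is why the lemma holds in this generality.
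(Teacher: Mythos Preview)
Your proof is correct and follows essentially the same route as the paper's: Banach--Alaoglu for (a), the associativity identity $g\circledast(g_n\circledast f_n)=(g*g_n)\circledast f_n$ together with separate weak$^*$-continuity of $\circledast$ for (b), and an approximate identity argument for (c). You supply more detail than the paper (which dismisses (a) as ``trivial or easy to check'' and in (c) invokes only an unspecified approximate unit), but the arguments are the same.
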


\begin{proof}
The properties in (a) are either trivial or easy to check. 

Next, let $g\in B_{L^1}$ and $h\in B_M$. Assume first that $h= \sum_n g_n\circledast f_n$ for some sequence $(g_n) \in B_{l^1 (\L1 )}$. Then 
$$
g\circledast h = \sum_n g\circledast (g_n \circledast f_n ) = \sum_n (g *g_n) \circledast f_n .
$$
Since $(g*g_n)\in B_{l^1 (\L1 )}$, this implies $g\circledast h\in B_M$. For general $h\in B_M$, by the definition of $B_M$, there exists a net $(h_\alpha)\subset B_M$, $h_\alpha = \sum_n g_n^\alpha \circledast f_n$ for some $(g_n^\alpha)\in B_{l^1 (\L1 )}$, such that $w^*-\lim_\alpha h_\alpha = h$. However, then $g\circledast h = w^*-\lim_\alpha g\circledast h_\alpha$ as one easily verifies. Since $B_M$ is weak$^*$ closed, we have proved (b).

By definition of $B_M$, one has $g\circledast f_n\in B_M$ for every $g\in B_{\L1}$. Taking an approximate unit $(g_j)$ in $B_{\L1}$, one easily shows $w^*-\lim_j g_j \circledast f_n = f_n$. Since $B_M$ is weak$^*$ closed, this proves (c).
\end{proof}

For a bounded sequence $(f_n)\subset L^\infty (\R_+ )$ we define the set $B_M\subset L^\infty (\R_+ )$ as in \eqref{bm}, and then we put
\begin{equation} \label{spacem}
M:= \R_+ \, B_M .
\end{equation}
Then $M$ is a (in general nonclosed) subspace of $L^\infty (\R_+ )$ and becomes a normed space when it is equipped with the Minkowski norm
$$
\| h\|_M := \inf \{ \lambda > 0 : h \in \lambda \, B_M \} .
$$
When $M$ is equipped with this Minkowski norm, then $B_M$ is the unit ball of $M$, and there is no ambiguity with our previously introduced notation. Moreover, $M$ embeds continuously into $L^\infty (\R_+ )$.

By a result by Dixmier, $M$ is a dual space, and in particular $M$ is a Banach space, \cite{Di48}. To be more precise, consider the natural embedding $S:\L1 \to M^*$ given by
\begin{equation} \label{s}
\langle Sg , m \rangle_{M^* ,M} := \langle g , m \rangle_{L^1 , L^\infty} ,
\end{equation}
and let 
\begin{equation} \label{ms}
M_* := \overline{{\rm range}\, S}^{M^*} .
\end{equation}
Then we have the following result; the short proof follows \cite[Proof of Theorem 1]{Kj77}.

\begin{lemma} \label{kaijser}
The space $M$ is isometrically isomorphic to $\mpd$, that is, to the dual of $M_*$.
\end{lemma}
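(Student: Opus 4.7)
The plan is to exhibit the canonical evaluation map
$$
J : M \to \mpd , \qquad (Jm)(\xi) := \xi(m) \quad (m\in M,\ \xi\in M_*\subset M^*) ,
$$
as a surjective linear isometry. Since $M_*$ sits isometrically inside $M^*$, one immediately has $\| Jm\|_{\mpd}\leq \| m\|_M$, so only the reverse bound and the surjectivity require work. Both will be obtained from Hahn-Banach separation in the locally convex space $(L^\infty(\R_+),\sigma(L^\infty,\L1))$; the hypothesis needed each time is that $B_M$ is balanced, convex and weak$^*$-closed in $L^\infty(\R_+)=\L1^*$, provided by Lemma~\ref{l1}(a) together with an inspection of \eqref{bm} for balancedness.

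For the isometry, fix $m\in M$ and $\lambda\in(0,\| m\|_M)$. Then $m\notin\lambda B_M$, so weak$^*$-Hahn-Banach separation produces $g\in\L1$ with
$$
|\langle g,m\rangle_{L^1,L^\infty}| > \lambda\sup_{n\in B_M}|\langle g,n\rangle_{L^1,L^\infty}| = \lambda\| Sg\|_{M^*} .
$$
Setting $\xi := Sg/\| Sg\|_{M^*}\in B_{M_*}$ yields $|\xi(m)|>\lambda$, so $\| Jm\|_{\mpd}>\lambda$; letting $\lambda\nearrow\| m\|_M$ gives $\| Jm\|_{\mpd}\geq\| m\|_M$.

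For surjectivity, fix $\Phi\in\mpd$. Since $M\hookrightarrow L^\infty(\R_+)$ continuously, say with embedding constant $c$, one has $\| Sg\|_{M^*}\leq c\| g\|_{L^1}$, so $g\mapsto\Phi(Sg)$ is a bounded linear functional on $\L1$. Identifying $\L1^*=L^\infty(\R_+)$ produces $\tilde m\in L^\infty(\R_+)$ with $\Phi(Sg)=\langle g,\tilde m\rangle_{L^1,L^\infty}$ for every $g\in\L1$. The crucial step is to upgrade this from $\tilde m\in L^\infty$ to $\tilde m\in M$ with $\| \tilde m\|_M\leq\| \Phi\|_{\mpd}$: if $\tilde m\notin\| \Phi\|_{\mpd}B_M$, a second weak$^*$-Hahn-Banach separation would give $g\in\L1$ with $|\langle g,\tilde m\rangle|>\| \Phi\|_{\mpd}\| Sg\|_{M^*}$, contradicting $|\Phi(Sg)|\leq\| \Phi\|_{\mpd}\| Sg\|_{M^*}$. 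Hence $\tilde m\in M$, and $J\tilde m=\Phi$ holds on the dense subspace $\mathrm{range}\,S\subset M_*$, and therefore on all of $M_*$ by continuity of both functionals.

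The main obstacle is conceptual rather than technical: recognising that one should apply weak$^*$-Hahn-Banach separation twice -- once to realise $\| m\|_M$ by functionals of the form $Sg/\| Sg\|_{M^*}$, and once to force the functional pulled back from $\mpd$ to sit in $M$ itself. Both separations are powered by the single hypothesis that $B_M$ is weak$^*$-closed in $L^\infty(\R_+)$, which is exactly the content of the Dixmier/Ng predual recognition theorem followed in \cite{Kj77}.
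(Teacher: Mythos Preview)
Your proof is correct, but it takes a somewhat different route from the paper's. The paper follows Kaijser's argument: it first observes that weak$^*$-compactness of $B_M$ in $L^\infty(\R_+)$ yields $\sigma(M,M_*)$-compactness of $B_M$, and then, given $m^{**}\in B_{\mpd}$, extends $m^{**}$ by Hahn--Banach to $B_{M^{**}}$, approximates it via Goldstine's theorem by a net in $B_M$, and uses the $\sigma(M,M_*)$-compactness to extract a limit $m\in B_M$ with $Jm=m^{**}$. This handles surjectivity and the isometric lower bound in one stroke.

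You instead run two direct Hahn--Banach separations in $(L^\infty,\sigma(L^\infty,\L1))$: one to realise $\|m\|_M$ by testing against normalised elements $Sg/\|Sg\|_{M^*}$, and one to force the $L^\infty$-function representing $\Phi\circ S$ to lie in $\|\Phi\|\,B_M$. This is the bipolar/Dixmier--Ng style argument, and it is arguably more elementary since it avoids Goldstine and the passage through $M^{**}$. A small point: Lemma~\ref{l1}(a) only records $B_M=-B_M$, not full balancedness, but as you note an inspection of \eqref{bm} (balancedness of $B_{l^1(\L1)}$ and stability of balancedness under weak$^*$-closure) supplies this; you genuinely need it in the complex case for the step $\sup_{n\in B_M}\Re\langle g,n\rangle=\sup_{n\in B_M}|\langle g,n\rangle|$. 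One should also remark that the separating $g$ satisfies $\|Sg\|_{M^*}>0$ (else $\langle g,m\rangle=0$, contradicting the separation), so the normalisation is legitimate. Both approaches are powered by the same hypothesis---weak$^*$-closedness of $B_M$---and yield the same conclusion.
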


\begin{proof}
The key point is the fact that, by construction, $B_M$ is weak$^*$ compact in $L^\infty (\R_+ )$. By the definition of the operator $S$ and by the definition of the space $M_*$, this implies that the unit ball $B_M$ is compact with respect to the $\sigma (M,M_* )$ topology. 

Consider the contraction $J: M \to \mpd$ which maps every $m\in M$ to the functional $Jm\in\mpd$ given by $\langle Jm , m^* \rangle_{\mpd, M_*} := \langle m, m^* \rangle_{M, M^*}$. The space $M_*$ separates the points in $M$ because the space $\L1$ separates the points in $M \subset L^\infty (\R_+ )$. Therefore, the operator $J$ is injective.

Next, let $m^{**}\in\mpd$ and assume for simplicity that $\| m^{**} \|_{\mpd} =1$. By Hahn-Banach, we may consider $m^{**}$ also as an element in $B_{M^{**}}$. By Goldstine's theorem, there exists a net $(m_\alpha )\subset B_M$ which converges to $m^{**}$ in $\sigma ( M^{**} , M^* )$. Since $B_M$ is compact with respect to the $\sigma (M,M_* )$ topology, there exists $m\in B_M$ such that 
$$
\langle m ,m^* \rangle_{M,M^*} = \lim_\alpha \, \langle m_\alpha , m^* \rangle_{M,M^*} = \langle m^{**} , m^* \rangle_{\mpd, M_*} \text{ for every } m^*\in M_* .
$$
Hence, $Jm = m^{**}$ and $\| m\|_M \leq 1 = \| Jm \|_{\mpd}$. We have thus proved that $J$ is also surjective and isometric.
\end{proof}

\begin{remark}
Using only the definitions of the operators $J$ and $S$, it is straightforward to verify that $S^*J$ is the natural embedding of $M$ into $L^\infty (\R_+ )$ and that
\begin{equation} \label{ranges}
{\rm range}\, S^* = M .
\end{equation}
\end{remark}

\begin{lemma} \label{l2}
The space $M$ is an $\L1$ module in a natural way: for every $g\in\L1$ and every $m\in M$ ($\subset L^\infty (\R_+ )$) the adjoint convolution $g\circledast m$ belongs to $M$ and 
$$
\| g\circledast m\|_M \leq \| g\|_{L^1} \, \| m\|_M .
$$
\end{lemma}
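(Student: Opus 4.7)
The plan is to derive the module property essentially as an immediate consequence of Lemma \ref{l1}(b), once we pin down the relationship between the Minkowski functional $\|\cdot\|_M$ and membership in $B_M$.

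First I would verify the attainment property: every nonzero $m\in M$ actually lies in $\|m\|_M\,B_M$. By definition of the Minkowski functional there is a sequence $\lambda_n\searrow \|m\|_M$ with $m\in\lambda_n B_M$, so $m/\lambda_n\in B_M$. Since $m/\lambda_n\to m/\|m\|_M$ in $L^\infty(\R_+)$ (hence a fortiori in the weak$^*$ topology) and $B_M$ is weak$^*$ closed by Lemma \ref{l1}(a), we conclude $m/\|m\|_M\in B_M$.

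Next I would invoke Lemma \ref{l1}(b): for $g\in B_{L^1}$ and $h\in B_M$ one has $g\circledast h\in B_M$. Combining this with homogeneity of the adjoint convolution, for any nonzero $g\in\L1$ and nonzero $m\in M$ we write
$$
\frac{g\circledast m}{\|g\|_{L^1}\,\|m\|_M} \;=\; \frac{g}{\|g\|_{L^1}}\circledast\frac{m}{\|m\|_M}\;\in\; B_M,
$$
using the previous step to ensure $m/\|m\|_M\in B_M$. This shows simultaneously that $g\circledast m\in\R_+\,B_M=M$ and that $\|g\circledast m\|_M\le \|g\|_{L^1}\,\|m\|_M$. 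The trivial cases $g=0$ or $m=0$ are immediate since $g\circledast m=0$ belongs to $M$.

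I do not anticipate a real obstacle here; the only subtle point is the attainment $m\in\|m\|_M B_M$, which hinges on the weak$^*$ closedness (in fact compactness) of $B_M$ established in Lemma \ref{l1}(a). Everything else is bilinearity and homogeneity of $\circledast$.
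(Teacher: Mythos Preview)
Your proof is correct and follows essentially the same approach as the paper: both normalize $g$ and $m$, invoke Lemma \ref{l1}(b), and read off the conclusion. You add the extra justification that $m/\|m\|_M\in B_M$ via weak$^*$ closedness of $B_M$, a detail the paper leaves implicit.
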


\begin{proof}
By Lemma \ref{l1} (b), for every nonzero $g\in\L1$ and $m\in M$ one has $\frac{g}{\| g\|_{L^1}} \circledast \frac{m}{\| m\|_M} \in B_M$. The claim follows immediately. 
\end{proof}

In the following, we will always consider $M$ as an $\L1$ module via the adjoint convolution. Note that together with $M$ also the dual space $M^*$ is an $\L1$ module if for every $g\in\L1$ and every $m^*\in M^*$ we define the product $g* m^*\in M^*$ by
$$
\langle g * m^* , m\rangle_{M^* ,M} := \langle m^* , g\circledast m \rangle_{M^* , M} , \quad m\in M .   
$$
We use again the notation $*$ for the adjoint of the adjoint convolution. If $M=L^\infty (\R_+ )$, then the product $*$ coincides with the usual convolution in $\L1 \subset L^\infty (\R_+ )^*$ and there is no ambiguity in the notation.

\begin{lemma} \label{l3}
The natural embedding $S:\L1 \to M^*$ defined in \eqref{s} is an $\L1$ module homomorphism. The space $M_*$ is an $\L1$ submodule of $M^*$.
\end{lemma}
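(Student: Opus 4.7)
The plan is essentially to unpack the definitions and chase the two pairings. For the first assertion, I would fix $f, g \in \L1$ and $m \in M$ and test both $S(f*g)$ and $f*Sg$ against $m$. Each side unfolds into an $L^1$--$L^\infty$ pairing: the left side becomes $\langle f*g, m\rangle_{L^1,L^\infty}$ directly from the definition \eqref{s} of $S$, while the right side, using first the definition of the $\L1$-action on $M^*$ and then the definition of $S$ again, becomes $\langle g, f\circledast m\rangle_{L^1,L^\infty}$. The coincidence of these two expressions follows from the adjointness identity $\langle f*g, h\rangle_{L^1,L^\infty} = \langle f, g\circledast h\rangle_{L^1,L^\infty}$ recalled earlier in the paper, combined with the commutativity of convolution in $\L1$.

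For the second assertion, the key observation is that for each fixed $f \in \L1$ the left-convolution operator $m^* \mapsto f * m^*$ is a bounded endomorphism of $M^*$. Indeed, dualizing the module estimate of Lemma \ref{l2} gives $\|f * m^*\|_{M^*} \leq \|f\|_{L^1}\, \|m^*\|_{M^*}$. Combined with the first assertion, which already implies that ${\rm range}\, S$ is itself an $\L1$-submodule of $M^*$ via $f * Sg = S(f * g)$, this is enough to conclude that the norm-closure $M_* = \overline{{\rm range}\, S}^{M^*}$ is also closed under the action: given $m^* \in M_*$ and a sequence $(Sg_n) \subset {\rm range}\, S$ converging to $m^*$ in $M^*$, one has $f * Sg_n = S(f * g_n) \in {\rm range}\, S$ and, by continuity of the module action, $f * Sg_n \to f * m^*$, so that $f * m^* \in M_*$.

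I do not anticipate any genuine obstacle here; the argument reduces to a compatibility between the three products (convolution $*$ on $\L1$, the adjoint convolution $\circledast$ on $M$, and the induced action $*$ on $M^*$) together with the boundedness of the module action on $M^*$. The only point requiring care is to keep track of which product is acting on which space at each step of the diagram chase.
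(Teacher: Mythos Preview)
Your proposal is correct and follows essentially the same route as the paper: the paper also tests $S(g*h)$ against an arbitrary $m\in M$, unfolds both sides via the definitions of $S$ and of the $\L1$-action on $M^*$, and invokes the adjointness identity for $\circledast$ (with commutativity of $*$) to obtain $S(g*h)=g*Sh$. Your treatment of the second assertion is actually more explicit than the paper's, which simply remarks that ``this equality proves that the closure of the range is a submodule'' without spelling out the boundedness argument you give.
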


\begin{proof}
For every $g$, $h\in\L1$ and every $m\in M$ one has
\begin{eqnarray*}
\langle S(g*h) , m\rangle_{M^* ,M} & = & \langle g*h , m \rangle_{L^1 , L^\infty} \\
& = & \langle h , g\circledast m \rangle_{L^1 , L^\infty} \\
& = & \langle Sh , g\circledast m \rangle_{M^* , M} \\
& = & \langle g*Sh , m \rangle_{M^* , M} .
\end{eqnarray*}
Since this equality holds for every $m\in M$, this proves $S(g*h) = g * Sh$ for every $g$, $h\in\L1$, and therefore $S$ is an $\L1$ module homomorphism. At the same time, this equality proves that the closure of the range is a submodule of $M^*$.
\end{proof}

We omit the proof of the following lemma which is straightforward.

\begin{lemma} \label{l3a}
Let $M_*$ be defined as in \eqref{ms}. The natural embedding
$$
T_* : \L1 \to \cL (M_*)
$$
given by $T_* g (m^*) := g* m^*$, $m^*\in M_*$, is an algebra homomorphism. 
\end{lemma}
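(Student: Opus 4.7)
The plan is to verify the three defining properties of an algebra homomorphism in turn, using only the identities between $*$ and $\circledast$ already established in the paper.

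First, I would check that $T_* g$ actually acts on $M_*$. Given $g\in\L1$, the map $m^*\mapsto g*m^*$ is linear on $M^*$, and its boundedness follows from Lemma \ref{l2}: for $m\in M$ and $m^*\in M^*$,
$$
|\langle g*m^*, m\rangle_{M^*,M}| = |\langle m^*, g\circledast m\rangle_{M^*,M}| \leq \|m^*\|_{M^*}\,\|g\|_{L^1}\,\|m\|_M,
$$
so $\|g*m^*\|_{M^*}\leq \|g\|_{L^1}\,\|m^*\|_{M^*}$. Since $M_*$ is defined in \eqref{ms} as the $M^*$-closure of $\operatorname{range} S$, and since $g*Sh = S(g*h)$ by Lemma \ref{l3}, the action $m^*\mapsto g*m^*$ sends $\operatorname{range} S$ into itself, and by the bound just proved it extends by continuity to a bounded operator from $M_*$ into $M_*$. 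Thus $T_* g\in\cL(M_*)$, with $\|T_* g\|_{\cL(M_*)}\leq\|g\|_{L^1}$.

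Second, linearity of $T_*$ as a map $\L1\to\cL(M_*)$ is immediate from the definition of the product $*$ on $M^*$: for fixed $m\in M$, the map $g\mapsto g\circledast m$ is linear from $\L1$ into $L^\infty(\R_+)$, which by duality transfers to linearity of $g\mapsto g*m^*$.

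Third, for the multiplicativity $T_*(f*g)=T_*(f)\,T_*(g)$, I would test against arbitrary $m\in M$ and $m^*\in M_*$. Using the definition of $*$ on $M^*$ twice and the identity $f\circledast(g\circledast m)=(f*g)\circledast m$ recalled at the end of Section 3, we get
$$
\langle f*(g*m^*), m\rangle_{M^*,M} = \langle g*m^*, f\circledast m\rangle_{M^*,M} = \langle m^*, g\circledast(f\circledast m)\rangle_{M^*,M} = \langle m^*, (g*f)\circledast m\rangle_{M^*,M}.
$$
Since $\L1$ with convolution is commutative, $g*f=f*g$, and the right-hand side equals $\langle (f*g)*m^*, m\rangle_{M^*,M}$. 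Hence $T_*(f)\,T_*(g)\,m^* = T_*(f*g)\,m^*$ for every $m^*\in M_*$, which is the desired homomorphism identity.

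There is no substantial obstacle here: every step reduces either to Lemma \ref{l2}, Lemma \ref{l3}, or to the pair of identities $f\circledast(g\circledast h)=(f*g)\circledast h$ and $\langle f*g,h\rangle=\langle f, g\circledast h\rangle$ that the paper has already set up, together with the commutativity of convolution on $\R_+$. This is why the authors deem the proof straightforward and omit it.
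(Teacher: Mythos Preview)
Your proof is correct and is precisely the straightforward verification the authors had in mind when they omitted the argument; each step relies only on Lemma \ref{l2}, Lemma \ref{l3}, the identities for $*$ and $\circledast$ set up in Section~3, and the commutativity of convolution. There is nothing to add.
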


The following lemma allows us to calculate $\| T_* g\|_{\cL (M_*)}$ in terms of the sequence $(f_n)$.
 
\begin{lemma} \label{l4}
Let $(f_n)\subset L^\infty (\R_+ )$ be a bounded sequence, let $M_*$ be defined as in \eqref{ms}, and let $T_* : \L1\to \cL (M_*)$ be the induced algebra homomorphism from Lemma \ref{l3a}. Then, for every $g\in\L1$, one has
\begin{equation}
\| T_* g\|_{\cL (M_*)} = \sup_n \| g\circledast f_n\|_M .
\end{equation}
\end{lemma}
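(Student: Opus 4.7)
My plan is to first convert the operator norm on $M_*$ into a supremum over $B_M$, using the duality $M = (M_*)^*$ from Lemma \ref{kaijser}. Because $M_*$ norms $M$ and, by definition of the product $*$ on $M^*$, $\langle g*m^*, m\rangle = \langle m^*, g\circledast m\rangle$ for $m\in M$, $m^*\in M^*$, one gets
$$
\|T_* g\|_{\cL(M_*)} = \sup_{m^*\in B_{M_*}}\|g*m^*\|_{M^*} = \sup_{m\in B_M,\, m^*\in B_{M_*}}|\langle m^*, g\circledast m\rangle| = \sup_{m\in B_M}\|g\circledast m\|_M.
$$
The lower bound $\sup_{m\in B_M}\|g\circledast m\|_M \geq \sup_n\|g\circledast f_n\|_M$ is then immediate from Lemma \ref{l1}(c), which guarantees $f_n\in B_M$.

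The core of the argument is the reverse inequality. Setting $C := \sup_n\|g\circledast f_n\|_M$ and $h_n := g\circledast f_n$, I aim to show $g\circledast B_M \subset C\cdot B_M$. First I treat $m$ lying in the pre-closure set, i.e.\ $m = \sum_n g_n\circledast f_n$ with $\sum_n\|g_n\|_{L^1}\leq 1$ (the series converging absolutely in $L^\infty$ since the $f_n$ are uniformly bounded). Using associativity $g\circledast(g_n\circledast f_n) = (g*g_n)\circledast f_n = (g_n*g)\circledast f_n = g_n\circledast h_n$, I rewrite
$$
g\circledast m = \sum_n g_n\circledast h_n .
$$
Since $h_n/C \in B_M$, by the definition \eqref{bm} each $h_n/C$ is a weak-$*$ limit of sums $\sum_k g_k^{(n,\alpha)}\circledast f_k$ with $\sum_k\|g_k^{(n,\alpha)}\|_{L^1}\leq 1$. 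Weak-$*$ continuity of $g_n\circledast(\cdot)$ on $L^\infty$ (it is the adjoint of left convolution by $g_n$) then gives
$$
g_n\circledast(h_n/C) = w^*\text{-}\lim_\alpha \sum_k (g_n*g_k^{(n,\alpha)})\circledast f_k .
$$
Each approximant lies in $\|g_n\|_{L^1}\cdot B_M$ because $\sum_k\|g_n*g_k^{(n,\alpha)}\|_{L^1} \leq \|g_n\|_{L^1}$, and weak-$*$ closedness of $B_M$ yields $\|g_n\circledast h_n\|_M \leq C\|g_n\|_{L^1}$. Summing, $\sum_n\|g_n\circledast h_n\|_M \leq C$, so the series converges absolutely in the Banach space $M$; continuity of $M\hookrightarrow L^\infty$ identifies its sum with $g\circledast m$, giving $\|g\circledast m\|_M\leq C$.

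Finally I extend to arbitrary $m\in B_M$: such $m$ is a weak-$*$ limit of elements of the special form above, and weak-$*$ continuity of $g\circledast(\cdot)$ together with weak-$*$ closedness of $C\cdot B_M$ yields $g\circledast m\in C\cdot B_M$. The main obstacle is the double weak-$*$ approximation in Step 3: because the $h_n$ are not themselves in the pre-closure set but only in its weak-$*$ closure, one must approximate $m$ by sums and simultaneously approximate each $h_n$ by a further sum, while controlling both the $L^1$-norms (to stay inside $B_M$) and the Banach-space convergence of the outer series in $M$.
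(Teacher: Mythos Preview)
Your argument is correct and follows essentially the same route as the paper: convert $\|T_*g\|_{\cL(M_*)}$ via duality into a supremum of $\|g\circledast m\|_M$ over $B_M$, get the lower bound from $f_n\in B_M$, and get the upper bound from the $\L1$-module property of $M$. Your first step is in fact a bit more direct than the paper's, which routes the same identity through the module homomorphism $S$ and the weak$^*$-dense pre-closure set before arriving at $\sup_{(g_n)\in B_{l^1(\L1)}}\|\sum_n g_n\circledast(g\circledast f_n)\|_M$.

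Two remarks. First, the passage where you approximate $h_n/C$ by elements of the pre-closure set and pass to weak$^*$-limits is exactly the content of Lemma~\ref{l2}; you may simply invoke it to obtain $\|g_n\circledast h_n\|_M\le \|g_n\|_{L^1}\|h_n\|_M\le C\|g_n\|_{L^1}$, sum, and be done for $m$ in the pre-closure, then close up in weak$^*$ as you do. Second, the ``main obstacle'' you flag---a simultaneous double approximation---does not actually arise: the inner approximation (of $h_n$) is handled once and for all by Lemma~\ref{l2}, and only a single weak$^*$ limit (in $m$) is needed at the end. The paper's upper bound is just the one-line estimate $\|\sum_n g_n\circledast(g\circledast f_n)\|_M\le\sum_n\|g_n\|_{L^1}\|g\circledast f_n\|_M\le\sup_n\|g\circledast f_n\|_M$ via Lemma~\ref{l2}.
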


\begin{proof}
Let $g\in\L1$. Then, by the definition of $T_*$, $S$, and by the definition of $M_*$,
\begin{eqnarray*}
\| T_* g\|_{\cL (M_*)} & = & \sup_{m^*\in B_{M_*}} \| g * m^* \|_{M^*} \\
& = & \sup_{m^*\in B_{M_*}} \, \sup_{m\in B_M} | \langle g * m^* , m\rangle_{M^* ,M} | \\
& = & \sup_{\substack{h\in \L1 \\ \| Sh\|_{M^*} \leq 1}} \, \sup_{m\in B_M} | \langle g * Sh , m\rangle_{M^* ,M} | .
\end{eqnarray*}
Since $S$ is an $\L1$ module homomorphism, and by the definition of $S$,
\begin{eqnarray*}
\| T_* g\|_{\cL (M_*)} & = & \sup_{\substack{h\in \L1 \\ \| Sh\|_{M^*} \leq 1}} \, \sup_{m\in B_M} | \langle S(g * h) , m\rangle_{M^* ,M} | \\
& = & \sup_{\substack{h\in \L1 \\ \| Sh\|_{M^*} \leq 1}} \, \sup_{m\in B_M} | \langle g * h , m\rangle_{L^1 ,L^\infty} | .
\end{eqnarray*}
Since, by definition, $\{ \sum_n g_n \circledast f_n : (g_n)\in B_{l^1 (\L1 )} \}$ is weak$^*$ dense in $B_M$ (with respect to the weak$^*$ topology in $L^\infty (\R_+ )$), and since $M_*$ is norming for $M$ by Lemma \ref{kaijser}, we can continue to compute
\begin{eqnarray*}
\| T_* g\|_{\cL (M_*)} & = & \sup_{\substack{h\in \L1 \\ \| Sh\|_{M^*} \leq 1}} \, \sup_{(g_n)\in B_{l^1 (\L1 )}} | \langle g * h , \sum_n g_n \circledast f_n \rangle_{L^1 ,L^\infty} | \\
& = & \sup_{\substack{h\in \L1 \\ \| Sh\|_{M^*} \leq 1}} \, \sup_{(g_n)\in B_{l^1 (\L1 )}} | \langle h , \sum_n g_n \circledast (g\circledast f_n) \rangle_{L^1 ,L^\infty} | \\
& = & \sup_{\substack{h\in \L1 \\ \| Sh\|_{M^*} \leq 1}} \, \sup_{(g_n)\in B_{l^1 (\L1 )}} | \langle Sh , \sum_n g_n \circledast (g\circledast f_n) \rangle_{M^* ,M} | \\
& = & \sup_{(g_n)\in B_{l^1 (\L1 )}} \| \sum_n g_n \circledast (g\circledast f_n) \|_M .
\end{eqnarray*}
This immediately implies 
\begin{eqnarray*}
\| T_* g\|_{\cL (M_*)} & \geq & \sup_{h\in B_{\L1}} \| h \circledast (g \circledast f_n ) \|_M  \\
& = & \sup_{h\in B_{\L1}} \| ( h * g ) \circledast f_n  \|_M \quad \text{for every } n . 
\end{eqnarray*}
By putting $h= \lambda e_\lambda$, letting $\lambda \to\infty$, and using Lemma \ref{l2}, we obtain
$$
\| T_* g\|_{\cL (M_*)} \geq \| g \circledast f_n  \|_M \quad \text{for every } n .
$$
On the other hand, by Lemma \ref{l2} again,
\begin{eqnarray*}
\| T_* g\|_{\cL (M_*)} & \leq & \sup_{(g_n)\in B_{l^1 (\L1 )}} \sum_n \| g_n\|_{L^1} \, \| g\circledast f_n \|_M \\
& \leq & \sup_{n} \| g\circledast f_n \|_M .
\end{eqnarray*}
The preceding two estimates imply the claim.
\end{proof}

The operator $T_*$ from Lemma \ref{l3a} will be equivalent to the operator we are looking for in Theorem \ref{main}. However, so far we have not said anything about the sequence $(f_n)\subset L^\infty (\R_+ )$ which served for the construction of $M$, and which allows us by Lemma \ref{l4} to obtain the desired resolvent estimate in Theorem \ref{main}. \\ 

It remains to explain how the sequence $(f_n)$ is constructed in order to prove Theorem \ref{main}. For the time being, let $f\in L^\infty (\R_+ )$ be a fixed function, and suppose that the Laplace transform $\hat{f}$ extends analytically to a bounded function on $\Sigma_\varphi$, where $\varphi\in C(\R_+ )$ satisfies $\inf \varphi >0$.\\

In order to simplify the notation, we define for every $k\in\Z$
\begin{eqnarray*}
& & \lambda_k := 2+ik , \\
& & d_k := {\rm dist}\, (\lambda_k , \partial\Sigma_\varphi ) , \text{ and} \\
& & \tilde{e}_k = e_{\lambda_k} ,
\end{eqnarray*}
and we will choose numbers
$$
c_k >0 
$$
depending on the functions $\lambda\mapsto e_\lambda\circledast f$ and $\varphi$; see Proposition \ref{l5} below for the precise definition of $c_k$.

We define inductively for $n\geq 1$ and ${\bf k} \in\Z^{n+1}$
$$
\tilde{e}_{\bf k} := \tilde{e}_{\bar{\bf k}} * \tilde{e}_{k_{n+1}} = \tilde{e}_{k_1} *\, \dots \,* \tilde{e}_{k_{n+1}}   
$$
and
$$
c_{\bf k} := c_{\bar{\bf k}} \cdot c_{k_{n+1}} = c_{k_1} \cdot \, \dots \, \cdot c_{k_{n+1}} ,
$$
where $\bar{\bf k}\in\Z^n$ is such that ${\bf k}=(\bar{\bf k},k_{n+1})$.

Then, for every $n\geq 1$ and every ${\bf k}\in\Z^n$ we put
\begin{equation} \label{fkdef}
f_{\bf k} := \frac{\tilde{e}_{\bf k} \circledast f}{c_{\bf k}} = \frac{\tilde{e}_{k_1} * \, \dots \, * \tilde{e}_{k_n}}{c_{k_1} \cdot \, \dots \, \cdot c_{k_n}} \circledast f .
\end{equation}
Finally, we set
$$
f_\infty := f ,
$$
and
$$
I:= \{\infty \} \cup \bigcup_{n\geq 1} \Z^n ,
$$
and we will define the unit ball $B_M$, the space $M$ and the space $X$ starting from the family
$$
( f_{\bf k} )_{{\bf k}\in I } .
$$

\begin{proposition} \label{l5}
Let $f\in L^\infty (\R_+ )$ be such that the Laplace transform $\hat{f}$ extends to a bounded analytic function in $\Sigma_\varphi$, where $\varphi\in C(\R_+ )$ satisfies $\inf \varphi >0$. Let $r\in (0,\frac14 )$ be arbitrary. For every $k\in\Z$ we put
\begin{equation} \label{defck}
c_k = \frac{4}{r}\, \frac{\log d_k}{d_k} .
\end{equation}
Then the family $(f_{\bf k})_{{\bf k}\in I}$ given by \eqref{fkdef} is bounded in $L^\infty (\R_+ )$.

The same is true if the condition \eqref{decayf} is satisfied and if we then put, for every $k\in\Z$,
$$
c_k = \frac{4}{r} \, \frac{1}{d_k} .
$$
\end{proposition}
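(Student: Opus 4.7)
The plan is to show $\sup_{{\bf k}\in I}\|f_{\bf k}\|_\infty < \infty$. Since $f_\infty = f \in L^\infty$ the case ${\bf k}=\infty$ is trivial, so it suffices to prove that $\|\tilde e_{\bf k}\circledast f\|_\infty \leq C\cdot c_{\bf k}$ for all ${\bf k}=(k_1,\ldots,k_n)\in\Z^n$, with $C$ depending only on $\|f\|_\infty$, $M:=\|\hat f\|_{L^\infty(\Sigma_\varphi)}$ and $r$. The naive bound $\|\tilde e_{\bf k}\circledast f\|_\infty \leq \|\tilde e_{\bf k}\|_{L^1}\|f\|_\infty \leq 2^{-n}\|f\|_\infty$ is too weak, because $c_{\bf k}$ can decay faster than $2^{-n}$ when the $d_{k_j}$'s are large, so the analytic extension of $\hat f$ to $\Sigma_\varphi$ must be used.

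The algebraic engine of the argument is the identity
\[
\widehat{e_\mu\circledast h}(\lambda) = \frac{\hat h(\mu)-\hat h(\lambda)}{\lambda-\mu},
\]
obtained by direct computation from the definition of $\circledast$. Iterating it along the decomposition $\tilde e_{\bf k}\circledast f = e_{\lambda_{k_1}}\circledast(\tilde e_{\bf k'}\circledast f)$, with ${\bf k'}=(k_2,\ldots,k_n)$, shows by induction on $n$ that $\tilde e_{\bf k}\circledast f = (-1)^{n-1}(e_\cdot\circledast f)[\lambda_{k_1},\ldots,\lambda_{k_n}]$, the $n$-th order divided difference of the $L^\infty$-valued analytic function $z\mapsto e_z\circledast f$ (initially defined on $\C_+$) at the nodes $\lambda_{k_j}$. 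By the Hermite--Cauchy representation of divided differences,
\[
\tilde e_{\bf k}\circledast f = \frac{(-1)^{n-1}}{2\pi i}\oint_\Gamma \frac{e_z\circledast f}{\prod_{j=1}^n(z-\lambda_{k_j})}\,dz
\]
as an $L^\infty$-valued identity, for any closed contour $\Gamma$ enclosing $\lambda_{k_1},\ldots,\lambda_{k_n}$ in the region where $z\mapsto e_z\circledast f$ extends $L^\infty$-analytically.

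The contour $\Gamma$ is taken as a union of small circles around the distinct values in $\{\lambda_{k_j}\}$, with radii $\leq\min(\tfrac12, r\,d_{k_j})$; repeated indices are folded into a single circle with a higher-order pole via Cauchy's derivative formula. Since the $\lambda_{k_j}$'s lie on $\{\Re\lambda=2\}$ at integer imaginary parts and $r<\tfrac14$, distinct circles can be chosen disjoint and the factors $|z-\lambda_{k_l}|$ admit spacing bounds from below. Under assumption \eqref{decayf} the input bound $\|e_z\circledast f\|_\infty\leq C/d_{k_j}$ is directly available on each circle, yielding $\|\tilde e_{\bf k}\circledast f\|_\infty \leq C'c_{\bf k}$ for the sharper choice $c_k=(4/r)/d_k$. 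In the general case the same argument runs with the weaker input $\|e_z\circledast f\|_\infty\leq C(\log d_k)/d_k$, which is the content of Lemma~\ref{conditiongamma} (to be proved in the sequel by a Cauchy integral on the boundary of $B(\lambda_k,rd_k)$, applied to the identity $(e_z\circledast f)(t)=e^{zt}\bigl(\hat f(z)-\int_0^t e^{-zu}f(u)\,du\bigr)$ whose second summand is entire). This accounts for the logarithmic factor in $c_k=(4/r)(\log d_k)/d_k$.

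The main technical obstacle is the combinatorial estimation of the multi-pole contour integral when the $k_j$ are spread along the vertical line: the product $\prod_{l\neq j}|z-\lambda_{k_l}|$ on each small circle must be controlled from below, and the resulting residue-type contributions must telescope to match the \emph{product} $c_{\bf k}=\prod_j c_{k_j}$ rather than merely the sum. A secondary subtlety is that the Hermite--Cauchy identity is used in an $L^\infty$-valued (not merely pointwise-in-$t$) sense, which requires a uniform-in-$t$ bound on the integrand along $\Gamma$; this uniform bound is precisely what \eqref{decayf} or Lemma~\ref{conditiongamma} supplies, so the analytic-extension lemma is the true engine of the estimate.
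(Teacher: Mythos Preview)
Your identification of the contour-integral representation (the paper's Lemma~\ref{l6}) and of Lemma~\ref{conditiongamma} as the analytic input is correct, and the $L^\infty$-valued analyticity you flag is exactly Lemma~\ref{laplacetoe}. But your proposed contour---a union of small circles of radius $\leq\min(\tfrac12,\,r d_{k_j})$ around the distinct $\lambda_{k_j}$---does not yield the required bound, and you effectively concede this by naming the ``combinatorial estimation'' as the main obstacle without resolving it. The failure is concrete: take $k_j=j$ for $j=1,\dots,n$ with $\varphi$ chosen so that $d_{k_j}\equiv D$ is large. Then $c_{\bf k}=\bigl((4/r)\tfrac{\log D}{D}\bigr)^n$, whereas on a circle of radius $\leq\tfrac12$ about $\lambda_{k_j}$ one only has $\prod_{l\neq j}|z-\lambda_{k_l}|$ of order $(j-1)!\,(n-j)!$, independent of $D$. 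Summing the residue bounds gives something of order $\tfrac{\log D}{D}$ times a purely combinatorial factor in $n$, which for large $D$ is far larger than $c_{\bf k}$. No telescoping saves this: the product structure of $c_{\bf k}$ demands that \emph{each} factor $|z-\lambda_{k_l}|$ be comparable to $1/c_{k_l}\sim d_{k_l}/\log d_{k_l}$, and small circles cannot deliver that.

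The paper's device is an induction on $n$ combined with a dichotomy. If some pair of indices is \emph{far apart}, say $|k_\nu-k_\mu|>\tfrac{r}{4}\bigl(h(d_{k_\nu})+h(d_{k_\mu})\bigr)$ with $h(s)=s/\log s$, one applies the resolvent identity $\tilde e_{k_\nu}*\tilde e_{k_\mu}=(\tilde e_{k_\nu}-\tilde e_{k_\mu})/i(k_\mu-k_\nu)$ to write $f_{\bf k}$ as a difference of two length-$n$ terms; the separation hypothesis is precisely what makes $1/|k_\nu-k_\mu|$ absorb the extra factor $c_{k_\nu}c_{k_\mu}/(c_{k_\nu}+c_{k_\mu})$. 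If on the contrary \emph{all} pairs are close in this sense, then every $\lambda_{k_j}$ lies in $B\bigl(\lambda_{k_1},\tfrac{r}{2}h(d_{k_1})\bigr)$ (for the index maximizing $h(d_{k_\cdot})$), and one takes a \emph{single} large circle $\Gamma=\partial B\bigl(\lambda_{k_1},\tfrac{3r}{4}h(d_{k_1})\bigr)$. On this $\Gamma$ each $|z-\lambda_{k_j}|\geq\tfrac{r}{4}h(d_{k_1})\geq\tfrac{r}{4}h(d_{k_j})=1/c_{k_j}$, so the denominator is $\geq 1/c_{\bf k}$, while the circumference times $\sup_\Gamma\|e_z\circledast f\|_\infty$ is $O(1)$ by Lemma~\ref{conditiongamma}. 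It is this split---resolvent identity when spread, one big circle when clustered---that is missing from your outline.
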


The proof of Proposition \ref{l5} is based on the following series of four lemmas. The statement and the proof of the following lemma should be compared to \cite[Lemmas 4.6.6, 4.7.9]{ABHN01}.

\begin{lemma} \label{laplacetoe}
Let $f\in L^\infty (\R_+ )$ be such that the Laplace transform $\hat{f}$ extends to a bounded analytic function in $\Sigma_\varphi$, where $\varphi\in C(\R_+ )$ satisfies $\inf \varphi >0$. Then also the function $\lambda\mapsto e_\lambda\circledast f$, $\C_+ \mapsto BUC (\R_+ )$ extends to a bounded analytic function in $\Sigma_\varphi$.
\end{lemma}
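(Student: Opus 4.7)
The plan is to exhibit the analytic extension explicitly in $t$ first and only afterwards promote the result to a $BUC(\R_+)$-valued statement. For $\Re\lambda>0$, the substitution $u = t+s$ in the defining integral of $e_\lambda \circledast f$ yields
\begin{equation*}
(e_\lambda \circledast f)(t) \;=\; e^{\lambda t}\,\hat{f}(\lambda) \,-\, \int_0^t e^{\lambda(t-u)} f(u)\,du .
\end{equation*}
Since by hypothesis $\hat{f}$ extends analytically to $\Sigma_\varphi$ while the finite integral on the right is entire in $\lambda$ for each fixed $t \geq 0$, the right-hand side provides an analytic extension of $\lambda \mapsto (e_\lambda \circledast f)(t)$ to all of $\Sigma_\varphi$.

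The main task is a bound in $t$ uniform in $\lambda\in\Sigma_\varphi$. Put $M := \sup_{\lambda\in\Sigma_\varphi}|\hat{f}(\lambda)|$ and fix $\delta$ with $0<\delta<\inf\varphi$, so that the strip $S := \{-\delta \leq \Re\lambda \leq 1\}$ lies inside $\Sigma_\varphi$. On the right boundary $\Re\lambda = 1$, the original defining integral gives $|(e_\lambda\circledast f)(t)| \leq \|f\|_\infty$, uniformly in $t$ and $\Im\lambda$. On the left boundary $\Re\lambda = -\delta$, the displayed identity and the triangle inequality give
\begin{equation*}
|(e_\lambda \circledast f)(t)| \;\leq\; e^{-\delta t} M + \|f\|_\infty \int_0^t e^{-\delta(t-u)}\,du \;\leq\; M + \|f\|_\infty/\delta ,
\end{equation*}
again uniformly in $t$ and $\Im\lambda$. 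For each fixed $t$, a crude use of the identity shows the extension is bounded on $S$ independently of $\Im\lambda$ (by, say, $(M + \|f\|_\infty t)e^t$), so the Phragm\'en--Lindel\"of principle applies on $S$ for each $t$ and yields a bound on the whole of $S$ uniform in $t$. For $\lambda\in\Sigma_\varphi$ with $\Re\lambda<-\delta$, the identity directly gives $|(e_\lambda\circledast f)(t)| \leq M + \|f\|_\infty/|\Re\lambda| \leq M + \|f\|_\infty/\delta$. Combining these three regions, one obtains $C := \sup_{\lambda\in\Sigma_\varphi,\,t\geq 0}|(e_\lambda\circledast f)(t)| < \infty$.

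It remains to deduce the two claimed $BUC(\R_+)$-properties. Differentiating the displayed identity in $t$ gives the relation $\partial_t(e_\lambda\circledast f)(t) = \lambda(e_\lambda\circledast f)(t) - f(t)$ for almost every $t$, so $t\mapsto(e_\lambda\circledast f)(t)$ is Lipschitz with constant $|\lambda|C + \|f\|_\infty$ and hence belongs to $BUC(\R_+)$. For analyticity of $\lambda\mapsto e_\lambda\circledast f$ as a $BUC(\R_+)$-valued map, one invokes the standard criterion that a locally bounded Banach-space-valued function is analytic as soon as it is weakly analytic against a total subset of the dual: the map is globally bounded by the previous paragraph, for each $t\geq 0$ the evaluation $\lambda \mapsto (e_\lambda\circledast f)(t)$ is analytic on $\Sigma_\varphi$, and the point evaluations $\{\delta_t : t\geq 0\}$ are total in $BUC(\R_+)^*$. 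The genuinely delicate step in the above is the Phragm\'en--Lindel\"of argument on $S$; its feasibility rests on the hypothesis $\inf\varphi > 0$, which is exactly what guarantees that a strip of positive width around the imaginary axis sits inside $\Sigma_\varphi$ on which one has uniform control of the extension on both boundary lines.
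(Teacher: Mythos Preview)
Your proof is correct. It shares the paper's opening identity $(e_\lambda\circledast f)(t) = e^{\lambda t}\hat f(\lambda) - \int_0^t e^{\lambda(t-u)}f(u)\,du$ but then takes a different route on both remaining steps. For the uniform bound near the imaginary axis, the paper works locally on disks $|\lambda-i\beta|\leq\alpha$, applying the maximum principle to the product $(e_\lambda\circledast f)(t)\,\bigl(1+(\lambda-i\beta)^2/\alpha^2\bigr)$, the auxiliary factor cancelling the $1/|\Re\lambda|$ blow-up at the two points where the circle meets the imaginary axis; you instead run Phragm\'en--Lindel\"of once on a full vertical strip, which is cleaner since your crude $t$-dependent bound $(M+\|f\|_\infty t)e^t$ is already uniform in $\Im\lambda$ and the three-lines theorem applies immediately. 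For the passage to $BUC(\R_+)$-valued analyticity, the paper first obtains $BC(\R_+)$-valued analyticity from pointwise analyticity plus uniform boundedness and then invokes the identity theorem for analytic functions to transfer into the closed subspace $BUC(\R_+)$ (the values lie there on $\C_+$); you instead show directly via the relation $\partial_t(e_\lambda\circledast f)=\lambda(e_\lambda\circledast f)-f$ that every value is Lipschitz, hence in $BUC(\R_+)$, and only then appeal to the weak-to-strong analyticity criterion with point evaluations. Your approach is a bit more direct and yields a quantitative Lipschitz constant as a bonus; the paper's multiplier trick has the minor advantage of producing explicit numerical constants in the sup-bound.
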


\begin{proof}
For every $t\in\R_+$ and every $\lambda\in\C_+$ one has
\begin{eqnarray*}
(e_\lambda \circledast  f) \, (t) & = & \int_0^\infty e^{-\lambda s} f(t+s) \; ds \\
& = & e^{\lambda t} \hat{f} (\lambda ) - \int_0^t e^{\lambda (t-s)} f(s) \; ds .
\end{eqnarray*}
From this identity we obtain first that for every fixed $t\in\R_+$ the function $\lambda\mapsto (e_\lambda \circledast f)\, (t)$ extends to an analytic function on $\Sigma_\varphi$, and we obtain second for every $t\in\R_+$ and every $\lambda\in\Sigma_\varphi$ the estimate
\begin{equation} \label{eft}
|(e_\lambda \circledast f ) \, (t) | \leq \left\{ \begin{array}{ll}
\frac{\| f\|_\infty}{|{\rm Re}\, \lambda|} & \text{if } {\rm Re}\,\lambda >0 , \\[2mm]
\frac{\| f\|_\infty}{|{\rm Re}\, \lambda|} + \| \hat{f}\|_\infty & \text{if } {\rm Re}\,\lambda <0 .
\end{array} \right.
\end{equation}
By assumption, there exists $0<\alpha\leq 1$ such that $\inf\varphi >\alpha$. The above estimate immediately yields
$$
\sup_{\lambda\in\Sigma_\varphi \atop |{\rm Re}\, \lambda|\geq \frac{\alpha}{2}} \sup_{t\in\R_+} |(e_\lambda \circledast f)\, (t)| \leq \frac{2\, \| f\|_\infty}{\alpha} +\| \hat{f}\|_\infty .
$$
In order to show that the function $(e_\lambda \circledast f)\, (t)$ is bounded in the strip $\{ \lambda\in\C : |{\rm Re}\,\lambda |\leq \frac{\alpha}{2} \}$ (with a bound independent of $t\in\R_+$) we can argue as follows. For every $\beta\in\R$, by the maximum principle and by the estimate \eqref{eft},
\begin{eqnarray*}
& & \sup_{| \lambda - i\beta |\leq \alpha} \big| (e_\lambda \circledast f)\, (t) \, \big( 1+\frac{(\lambda -i\beta)^2}{\alpha^2} \big) \big| \\
& = & \sup_{|\lambda - i\beta |=\alpha} \big| (e_\lambda \circledast f)\, (t) \, \big( 1+\frac{(\lambda -i\beta)^2}{\alpha^2} \big) \big| \\
& \leq & \frac{4\, \| f\|_\infty}{\alpha} + 4\, \| \hat{f}\|_\infty .
\end{eqnarray*}
Hence, for every $t\in\R_+$ and every $\beta\in\R$
$$
\sup_{| \lambda - i\beta |\leq \frac{\alpha}{2}} | (e_\lambda \circledast f)\, (t) | \leq \frac{6\, \| f\|_\infty}{\alpha} + 6\, \| \hat{f}\|_\infty ,
$$
which yields the desired estimate in the strip $\{ \lambda\in\C : |{\rm Re}\,\lambda |<\frac{\alpha}{2} \}$. So we finally obtain
$$
\sup_{\lambda\in\Sigma_\varphi} \sup_{t\in\R_+} |(e_\lambda \circledast f)\, (t)| <\infty ,
$$
and in particular the function $\lambda\mapsto e_\lambda\circledast f$ is bounded on $\Sigma_\varphi$ with values in $BC (\R_+ )$. Now one may argue as in the proof of \cite[Corollary A.4]{ABHN01}. Pointwise analyticity and uniform boundedness imply, by \cite[Proposition A.3]{ABHN01}, that the function $\lambda\mapsto e_\lambda\circledast f$ is bounded and analytic on $\Sigma_\varphi$ with values in $BC (\R_+ )$. Since $e_\lambda\circledast f\in BUC (\R_+ )$ for every $\lambda\in\C_+$, by the identity theorem for analytic functions (see, for example, the version in \cite[Proposition A.2]{ABHN01}), we finally obtain the claim. 
\end{proof}

The main argument in the proof of the following lemma (the two constants theorem) is also used in \cite[Proof of Theorem 5.3]{Il07}, but the following lemma gives a better estimate. Recall that $\lambda_\beta = 2+i\beta$ and $d_\beta = {\rm dist}\, (\lambda_\beta , \partial\Sigma_\varphi )$.

\begin{lemma} \label{conditionalpha}
Let $X$ be some Banach space and let $\varphi\in C(\R_+ )$ be a nonnegative function. Let $h:\Sigma_\varphi \to X$ be a bounded analytic function satisfying the estimate
$$
\| h(\lambda ) \| \leq \frac{C}{{\rm Re}\, \lambda} \quad \text{for every } \lambda\in\C_+ \text{ and some } C\geq 0 . 
$$
Then for every $r \in (0,\frac14 )$ there exists $C_r \geq 0$ such that for every $\beta\in\R$ 
$$
\sup_{\lambda\in B(\lambda_\beta ,r \frac{d_\beta}{\log d_\beta} )} \| h(\lambda ) \| \leq C_r \, \frac{\log d_\beta}{d_\beta} .
$$
\end{lemma}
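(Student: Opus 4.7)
The plan is to apply the two-constants theorem, with the disc $D := B(\lambda_\beta, d_\beta) \subset \Sigma_\varphi$ as the base domain. Parametrising $\partial D$ by $\mu = \lambda_\beta + d_\beta e^{i\theta}$, so that $\Re\mu = 2+d_\beta\cos\theta$, I observe that on the ``good'' arc $\{\cos\theta > -2/d_\beta\}$ the hypothesis gives the refined bound $\|h(\mu)\| \le C/(2+d_\beta\cos\theta)$, while on the remainder only the global bound $\|h(\mu)\|\le M_\infty := \sup_{\Sigma_\varphi}\|h\|$ is available.

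Using that $\log\|h\|$ is subharmonic on $D$, I would first combine the Poisson representation at $\lambda_\beta$ with the standard identity $\int_{-\pi/2}^{\pi/2}\log(2+d_\beta\cos\theta)\,d\theta = \pi\log d_\beta + O(1)$; this yields the baseline estimate $\|h(\lambda_\beta)\| \le K/\sqrt{d_\beta}$. To upgrade this to the claimed rate $(\log d_\beta)/d_\beta$ I would bootstrap: take the freshly obtained bound as an improved substitute for $M_\infty$ on the bad part of $\partial D$ and reapply the two-constants theorem on a slightly narrower sub-domain around $\lambda_\beta$. Organising this into an iteration of order $\log\log d_\beta$ steps along a geometric shrinking of the bad arc, the exponent of $d_\beta$ approaches $1$ while the accumulated constants contribute a factor comparable to $\log d_\beta$ in the numerator.

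To pass from the pointwise bound at $\lambda_\beta$ to a supremum bound on $B(\lambda_\beta, rd_\beta/\log d_\beta)$, I would replay the argument at an arbitrary $\lambda_0$ in the smaller ball. The restriction $r < 1/4$ guarantees that $B(\lambda_0,(3/4)d_\beta) \subset \Sigma_\varphi$ and that the portion of its boundary lying in $\C_+$ still carries a bound of the form $C'/d_\beta$ on an arc of measure comparable to $\pi$, so the whole scheme goes through verbatim with new constants depending only on $r$.

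The main obstacle will be the bookkeeping of the iteration: one has to verify that the multiplicative constants and the exponents $\alpha_n$ of $d_\beta$ propagate in just the right way for the factor $\log d_\beta$ to emerge cleanly, rather than being absorbed into a worse power of $d_\beta$ or leaving behind a spurious $(\log d_\beta)^{1+\varepsilon}$.
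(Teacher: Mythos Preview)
You have correctly identified the two-constants theorem as the key tool, but the paper's proof is a \emph{single} application of it, with no iteration, and the way it avoids your bookkeeping problem is instructive. Instead of centring a disc at $\lambda_\beta$ (where the harmonic measure of the good arc is only $1/2$, giving your $d_\beta^{-1/2}$), the paper rescales to a fixed square $\Omega=\{|\Re\lambda|,|\Im\lambda|<1\}$ via $\lambda\mapsto \lambda_\beta+\frac{d_\beta}{4}(\lambda-1+\frac{1}{\log d_\beta})$. Under this map the right edge $\Gamma_0$ lands on the vertical line $\Re=2+\frac{d_\beta}{4\log d_\beta}$, where the hypothesis gives $\|h\|\le C\,\frac{\log d_\beta}{d_\beta}$ directly; the rest of $\partial\Omega$ carries only the global bound. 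The crucial point is that $\lambda_\beta$ itself corresponds to the point $1-\frac{1}{\log d_\beta}\in\Omega$, i.e.\ at distance $O(1/\log d_\beta)$ from $\Gamma_0$, and the entire target ball $B(\lambda_\beta,r\,d_\beta/\log d_\beta)$ corresponds to a ball of radius $4r/\log d_\beta$ there. Since the harmonic measure $w$ of $\Gamma_0$ is Lipschitz near the interior of $\Gamma_0$ (Schwarz reflection), one has $w\ge 1-\frac{c(1+4r)}{\log d_\beta}$ on that ball, and the two-constants estimate gives
\[
\|h\|\le C\,\Bigl(\frac{\log d_\beta}{d_\beta}\Bigr)^{1-\frac{c(1+4r)}{\log d_\beta}}
= C\,e^{c(1+4r)\bigl(1-\frac{\log\log d_\beta}{\log d_\beta}\bigr)}\,\frac{\log d_\beta}{d_\beta}
\le C_r\,\frac{\log d_\beta}{d_\beta}.
\]
No bootstrap is needed: the factor $\log d_\beta$ arises not from accumulated iteration constants but simply because the good boundary is placed at $\Re\sim d_\beta/\log d_\beta$ so that the target ball sits at distance $O(1/\log d_\beta)$ from it.

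Your iteration is not obviously wrong, but the concern you raise is real. Each step picks up a fixed multiplicative constant and requires shrinking the domain by a fixed factor; after $\sim\log\log d_\beta$ steps these compound to $(\log d_\beta)^{C}$ for some $C$ depending on the constants in the recursion, so you would most likely end with $(\log d_\beta)^{C}/d_\beta$ rather than $\log d_\beta/d_\beta$. That weaker bound would still suffice for the applications in the paper, but it is not what the lemma asserts, and in any case the one-shot argument above is both sharper and shorter.
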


\begin{proof}
We may assume that the constant $C$ from the hypothesis satisfies $C\geq \| h\|_\infty$. 
Fix $r\in (0,\frac14 )$. We may in the following consider only those $\beta\in\R$ for which $4< \log d_\beta$. For the other $\beta$, the estimate in the claim becomes trivial if the constant $C_r$ is chosen sufficiently large.

Let 
$$
\Omega := \{ \lambda\in\C : |{\rm Re}\,\lambda | , |{\rm Im}\,\lambda | < 1 \} ,
$$
and let $\Gamma_0 := \{ \lambda \in\partial\Omega : {\rm Re}\, \lambda =1 \}$ and $\Gamma_1 := \partial\Omega \setminus \Gamma_0$. 

By the two constants theorem, for every analytic function $g:\Omega \to X$ having a continuous extension to $\bar{\Omega}$ and satisfying the boundary estimate
$$
\| g(\lambda )\| \leq C_i  \quad \text{if } \lambda\in\Gamma_i \,\, (i=0,\, 1) ,
$$
one has the estimate
$$
\| g(\lambda ) \| \leq C_0^{w(\lambda )} \, C_1^{1-w(\lambda )} \quad \text{for every } \lambda\in\Omega ,
$$
where $w = w_\Omega (\cdot , \Gamma_0 )$ is the harmonic measure of $\Gamma_0$ with respect to $\Omega$, that is, $w: \Omega \to [0,1]$ is the harmonic function satisfying $w=1$ on $\Gamma_0$ and $w=0$ on $\Gamma_1$.

For every $\beta\in\R$ with $\log d_\beta >4$ we apply this two constants theorem to the function given by
$$
g(\lambda ) = h(\lambda_\beta + \frac{d_\beta}{4} (\lambda - 1+ \frac{1}{\log d_\beta}) ) , \quad \lambda\in\bar{\Omega} , 
$$
which satisfies by assumption the estimates
$$
\| g(\lambda ) \| \leq \left\{ \begin{array}{ll}
C \, \frac{\log d_\beta}{d_\beta} & \text{if } \lambda\in\Gamma_0 , \text{ and} \\[2mm]
C & \text{if } \lambda\in\Gamma_1 .
                               \end{array} \right.
$$
We then obtain
$$
\| g (\lambda ) \| \leq C\, \big( \frac{\log d_\beta}{d_\beta} \big)^{w(\lambda )} \quad \text{for every } \lambda\in\Omega .
$$

By the Schwarz reflection principle, the function $1-w$ extends to a harmonic function in the rectangle $\{ \lambda \in\C : -1 < {\rm Re}\,\lambda < 3$, $|{\rm Im}\,\lambda| <1 \}$, and in particular the function $w$ is continuously differentiable there. We can therefore find a constant $c>0$ such that 
$$
\inf_{\lambda\in B(1-\frac{1}{\log d_\beta} , 4r \, \frac{1}{\log d_\beta} )} w(\lambda ) \geq 1- \frac{c\, (1+4r)}{\log d_\beta} .
$$ 

Combining the preceding two estimates, we obtain
\begin{eqnarray*}
\sup_{\lambda\in B(\lambda_\beta , r \, \frac{d_\beta}{\log d_\beta} )} \| h(\lambda ) \| & = & \sup_{\lambda\in B(1-\frac{1}{\log d_\beta} , 4r \, \frac{1}{\log d_\beta} )}  \| g(\lambda ) \| \\
& \leq & C \, \sup_{\lambda\in B(1-\frac{1}{\log d_\beta} , 4r \, \frac{1}{\log d_\beta} )} \big( \frac{\log d_\beta}{d_\beta} \big)^{w(\lambda )} \\
& \leq & C \, \big( \frac{\log d_\beta}{d_\beta} \big)^{1-\frac{c\, (1+4r)}{\log d_\beta}} \\
& = & C \, e^{c\, (1+4r) (1- \frac{\log \log d_\beta}{\log d_\beta})} \, \frac{\log d_\beta}{d_\beta} \\
& \leq & C_r \, \frac{\log d_\beta}{d_\beta} .
\end{eqnarray*}
The claim is proved.
\end{proof}

\begin{lemma} \label{conditiongamma}
Let $f\in L^\infty (\R_+ )$ be such that the Laplace transform $\hat{f}$ extends to a bounded analytic function in $\Sigma_\varphi$, where $\varphi\in C(\R )^+$ satisfies $\inf\varphi >0$. Then for every $r\in (0,\frac14 )$ there exists $C_r \geq 0$ such that 
\begin{eqnarray*}
\sup_{\lambda\in B(\lambda_k , r \frac{d_k}{\log d_k})} \| e_\lambda \circledast f\|_\infty \leq C_r \, \frac{\log d_k}{d_k} \quad \text{for every } k\in\Z .
\end{eqnarray*}
\end{lemma}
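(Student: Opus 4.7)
The plan is to deduce Lemma \ref{conditiongamma} directly from the two previous lemmas by applying Lemma \ref{conditionalpha} to the Banach-space-valued function $h(\lambda) := e_\lambda \circledast f$, viewed as a map into $X = BUC(\R_+)$ (equipped with the sup norm, which coincides with the $L^\infty$-norm on this subspace).

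First I would verify the two hypotheses required by Lemma \ref{conditionalpha}. Analyticity and boundedness of $h$ on $\Sigma_\varphi$ are exactly the content of Lemma \ref{laplacetoe}, so nothing new is needed there. For the half-plane decay hypothesis $\|h(\lambda)\| \leq C/\Re\lambda$ on $\C_+$, I would invoke the pointwise estimate established in the proof of Lemma \ref{laplacetoe}: for $\Re\lambda > 0$ and every $t \geq 0$ one has
$$
|(e_\lambda \circledast f)(t)| = \Bigl| \int_0^\infty e^{-\lambda s} f(t+s)\, ds \Bigr| \leq \frac{\|f\|_\infty}{\Re\lambda},
$$
and taking the supremum over $t \in \R_+$ yields $\|e_\lambda \circledast f\|_\infty \leq \|f\|_\infty / \Re\lambda$, which is the required bound with $C = \|f\|_\infty$.

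Having verified both hypotheses, Lemma \ref{conditionalpha} applies verbatim and gives, for any $r \in (0, 1/4)$, a constant $C_r$ such that
$$
\sup_{\lambda \in B(\lambda_\beta, r\, d_\beta / \log d_\beta)} \|e_\lambda \circledast f\|_\infty \leq C_r\, \frac{\log d_\beta}{d_\beta}
$$
for every $\beta \in \R$. Specializing $\beta = k \in \Z$ yields the statement of Lemma \ref{conditiongamma}.

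I do not expect any real obstacle: the lemma is essentially a corollary, combining the analytic extension result (Lemma \ref{laplacetoe}) with the Phragm\'en--Lindel\"of / harmonic-measure estimate (Lemma \ref{conditionalpha}). The only minor care needed is to ensure the decay bound $C/\Re\lambda$ on the open right half-plane is stated for the correct norm, namely the $L^\infty(\R_+)$-norm (equivalently the $BUC$-norm of $h(\lambda)$), but this is read off directly from the pointwise estimate above by taking the supremum in $t$, independently of $\lambda$.
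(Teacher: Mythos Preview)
Your proposal is correct and follows exactly the paper's approach: the paper's proof is a one-line remark that the estimate $\| e_\lambda \circledast f\|_\infty \leq C/\Re\lambda$ on $\C_+$ together with Lemma~\ref{laplacetoe} feeds directly into Lemma~\ref{conditionalpha}. You have simply spelled out the verification of the hypotheses in slightly more detail.
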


\begin{proof}
Since $\| e_\lambda \circledast f\|_\infty \leq \frac{C}{{\rm Re}\, \lambda}$ for every $\lambda\in\C_+$, this lemma is a direct consequence of Lemma \ref{laplacetoe} and Lemma \ref{conditionalpha}.
\end{proof}

The following is a consequence of the resolvent identity and should probably be known. We will give the easy proof here.

\begin{lemma} \label{l6}
For every $n\geq 1$, every $\lambda_1$, $\dots$, $\lambda_n \in\C_+$, and every closed path $\Gamma\subset\C_+$ such that $\lambda_1$, $\dots$, $\lambda_n$ are in the interior of $\Gamma$ one has 
\begin{equation} \label{multi-res}
e_{\lambda_1}* \, \dots \, *e_{\lambda_n} = \frac{(-1)^{n+1}}{2\pi i} \int_\Gamma \frac{e_\lambda}{(\lambda -\lambda_1) \cdot \, \dots \, \cdot (\lambda -\lambda_n)} \; d\lambda .
\end{equation}
\end{lemma}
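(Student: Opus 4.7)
The plan is to proceed by induction on $n$, using the elementary resolvent-type identity
$$
e_\lambda * e_\mu = \frac{e_\mu - e_\lambda}{\lambda - \mu} \quad (\lambda \neq \mu),
$$
which is just the classical formula for the convolution of two scalar exponentials.

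For the base case $n=1$, I would fix $t\geq 0$ and observe that $\lambda\mapsto e^{-\lambda t}$ is entire, so Cauchy's integral formula gives $e^{-\lambda_1 t}=\frac{1}{2\pi i}\int_\Gamma \frac{e^{-\lambda t}}{\lambda -\lambda_1}\,d\lambda$. This is the claim for $n=1$ (applied pointwise in $t$).

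For the inductive step, assume the formula at level $n$ and take $\lambda_1,\dots,\lambda_{n+1}\in\C_+$ inside $\Gamma$. Writing the $(n+1)$-fold convolution as $(e_{\lambda_1}*\dots *e_{\lambda_n})*e_{\lambda_{n+1}}$ and applying the inductive hypothesis, I would interchange convolution with $e_{\lambda_{n+1}}$ and the contour integral. This exchange is justified because $\Gamma$ is a compact subset of $\C_+$, so $\inf_{\lambda\in\Gamma}\Re\lambda>0$ and the map $\lambda\mapsto e_\lambda *e_{\lambda_{n+1}}$ is continuous from $\Gamma$ into $L^1\cap L^\infty(\R_+)$, so Fubini applies pointwise in $t$. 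This yields
$$
e_{\lambda_1}*\dots * e_{\lambda_{n+1}}=\frac{(-1)^{n+1}}{2\pi i}\int_\Gamma \frac{e_\lambda * e_{\lambda_{n+1}}}{\prod_{j=1}^n(\lambda -\lambda_j)}\,d\lambda.
$$
Substituting the resolvent identity above then splits the integrand, giving
$$
\frac{(-1)^{n+1}}{2\pi i}\,e_{\lambda_{n+1}}\int_\Gamma \frac{d\lambda}{\prod_{j=1}^{n+1}(\lambda -\lambda_j)}-\frac{(-1)^{n+1}}{2\pi i}\int_\Gamma \frac{e_\lambda}{\prod_{j=1}^{n+1}(\lambda -\lambda_j)}\,d\lambda.
$$

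The scalar integral in the first term vanishes: its integrand is a rational function decaying like $|\lambda|^{-(n+1)}$ with $n+1\geq 2$ and all its poles inside $\Gamma$, so deforming $\Gamma$ to a circle of radius $R\to\infty$ shows the integral is $0$ (equivalently, the sum of residues of a proper rational function is zero). What remains is precisely the formula at level $n+1$, with the sign flipped from $(-1)^{n+1}$ to $(-1)^{n+2}$, completing the induction.

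I do not expect any genuine obstacle: the only technical point is the Fubini-type interchange of the contour integral with the convolution against $e_{\lambda_{n+1}}$, which is routine given that $\Gamma$ is a compact path in $\C_+$. A cleaner alternative, if a more compact writeup is preferred, would be to reduce by analyticity in $(\lambda_1,\dots,\lambda_n)$ to the case of pairwise distinct $\lambda_j$ and then verify both sides by partial-fraction decomposition combined with Cauchy's formula; but the inductive argument above seems most natural since the resolvent identity is exactly what produces the product structure.
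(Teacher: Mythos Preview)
Your proposal is correct and follows essentially the same route as the paper's proof: induction on $n$, Cauchy's formula for the base case, the resolvent identity $e_\lambda * e_{\lambda_{n+1}} = \frac{e_{\lambda_{n+1}} - e_\lambda}{\lambda - \lambda_{n+1}}$ inside the contour integral for the inductive step, and the vanishing of the scalar integral $\int_\Gamma \prod_{j=1}^{n+1}(\lambda-\lambda_j)^{-1}\,d\lambda$ via deformation to a large circle. Your added remark justifying the interchange of convolution and contour integration is a detail the paper leaves implicit.
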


\begin{proof}
The proof goes by induction on $n$. 

If $n=1$, then the formula \eqref{multi-res} is just Cauchy's integral formula.

So assume that the formula \eqref{multi-res} is true for some $n\geq 1$. Let $\lambda_1$, $\dots$, $\lambda_n$, $\lambda_{n+1}\in\C_+$, and let $\Gamma\subset\C_+$ be a closed path such that $\lambda_1$, $\dots$, $\lambda_n$, $\lambda_{n+1}$ are in the interior of $\Gamma$. Then, by the resolvent identity and the induction hypothesis,
\begin{eqnarray*}
e_{\lambda_1}* \, \dots \, *e_{\lambda_n} * e_{\lambda_{n+1}} & = & \frac{(-1)^{n+1}}{2\pi i} \int_\Gamma \frac{e_\lambda * e_{\lambda_{n+1}}}{(\lambda -\lambda_1) \cdot \, \dots \, \cdot (\lambda -\lambda_n)} \; d\lambda \\
& = & \frac{(-1)^{n+2}}{2\pi i} \int_\Gamma \frac{e_\lambda}{(\lambda -\lambda_1) \cdot \, \dots \, \cdot (\lambda -\lambda_n) \, (\lambda - \lambda_{n+1}) } \; d\lambda + \\
& & + e_{\lambda_{n+1}}\, \frac{(-1)^{n+1}}{2\pi i} \int_\Gamma \frac{1}{(\lambda -\lambda_1) \cdot \, \dots \, \cdot (\lambda -\lambda_n) \, (\lambda - \lambda_{n+1}) } \; d\lambda .
\end{eqnarray*}
For the induction step it suffices to show that the second integral on the right-hand side of this equality vanishes. In order to see that this integral vanishes, we replace the path $\Gamma$ by a circle centered in $0$ and having radius $R>0$ large enough, without changing the value of the integral. A simple estimate then shows that
$$
\big| \int_{|\lambda |=R} \frac{1}{(\lambda -\lambda_1) \cdot \, \dots \, \cdot (\lambda -\lambda_n) \, (\lambda - \lambda_{n+1}) } \; d\lambda \big| = O(R^{-n}) \text{ as } R\to\infty .
$$
Since the left-hand side is independent of $R$ and since $n\geq 1$, by letting $R\to\infty$, we obtain that the integral above is zero. 
\end{proof}

\begin{proof}[Proof of Proposition \ref{l5}]
It will be convenient in this proof to define the function $h(s) := \frac{s}{\log s}$, $s\geq 2$. Then 
$$
c_k = \frac{4}{r} \frac{1}{h(d_k)} \quad \text{for every } k\in\Z ,
$$
where $r\in (0,\frac14 )$ is fixed as in the assumption. Let $C_r\geq 0$ be as in Lemma \ref{conditiongamma}.
We will show that
$$
\sup_{{\bf k}\in I, {\bf k}\not= \infty} \| f_{\bf k}\|_\infty \leq C_r .
$$
The proof goes by induction on $n$.

By Lemma \ref{conditiongamma}, for every $k\in\Z$, 
$$
\| \tilde{e}_k \circledast f\|_\infty \leq \frac{C_r}{h(d_k)} \leq C_r \, c_k 
$$
by the definition of $c_k$ and since $r\leq 1$, and therefore
$$
\| f_k\|_\infty \leq C_r \text{ for every } k\in\Z .
$$

Next, we assume that there exists $n\geq 1$ such that 
$$
\| f_{\bf k} \|_\infty \leq C_r \text{ for every } {\bf k}\in\Z^{n} .
$$

Let ${\bf k}=(k_\nu )_{1\leq \nu\leq n+1}\in\Z^{n+1}$. 

Assume first that there exist $1\leq \nu$, $\mu\leq n+1$ such that 
\begin{equation} \label{kmu1}
|k_\nu - k_\mu | > \frac{r}{4} (h(d_{k_\nu}) + h(d_{k_\mu}) ) .
\end{equation}
There exists $\tilde{{\bf k}}\in I$ such that 
$$
f_{\bf k} = \frac{\tilde{e}_{k_\nu} * \tilde{e}_{k_\mu}}{c_{k_\nu} \cdot c_{k_\mu}} \circledast f_{\tilde{{\bf k}}} , 
$$
and therefore, by the resolvent identity, by the induction hypothesis, by the definition of $c_{\bf k}$, and by \eqref{kmu1},
\begin{eqnarray*}
\| f_{\bf k}\|_\infty & = & \big\| \frac{\tilde{e}_{k_\nu} \circledast f_{\tilde{{\bf k}}} - \tilde{e}_{k_\mu} \circledast f_{\tilde{{\bf k}}}}{(k_\mu - k_\nu) c_{k_\nu} \cdot c_{k_\mu}} \big\|_\infty \\
& \leq & C_r \, \frac{1}{|k_\nu - k_\mu|} \, \big( \frac{1}{c_{k_\nu}} + \frac{1}{c_{k_\mu}} \big) \\
& = & C_r \, \frac{1}{|k_\nu - k_\mu|} \frac{r}{4} \big( h(d_{k_\nu}) + h (d_{k_\mu} ) \big) \\
& \leq & C_r .
\end{eqnarray*}

Hence, we may suppose that 
\begin{equation} \label{kmu2}
|k_\nu - k_\mu | \leq \frac{r}{4} (h(d_{k_\nu}) + h(d_{k_\mu}) ) \quad \text{for every } 1\leq \nu, \, \mu \leq n+1 .
\end{equation}
After a permutation of the indices, we may assume in addition that
$$
h( d_{k_1} ) = \max_{1\leq \nu\leq n+1} h (d_{k_\nu}) .
$$
From the estimate $|\lambda_{k_\nu} - \lambda_{k_1} | = |k_\nu - k_1 | \leq \frac{r}{2} h(d_{k_1} )$ we obtain
$$
\lambda_{k_\nu} \in B(\lambda_1 , \textstyle{\frac{3r}{4}} h( d_{k_1} ) ) \quad \text{for every } 1\leq \nu \leq n+1 .
$$
As a consequence, by Lemma \ref{l6}, and since $\lambda\mapsto e_\lambda \circledast f$ extends to a bounded analytic function on $B(\lambda_{k_1} , h(d_{k_1}) )$, 
\begin{equation} \label{ekf}
\tilde{e}_{\bf k} \circledast f = (e_{\lambda_{k_1}} * \, \dots \, * e_{\lambda_{k_{n+1}}} ) \circledast f = \frac{1}{2\pi i} \int_{\partial B(\lambda_{k_1} , \frac{3r}{4} \, h(d_{k_1}) )} \frac{e_\lambda \circledast f}{(\lambda - \lambda_{k_1}) \cdot \dots \cdot (\lambda - \lambda_{k_{n+1}})} d\lambda .
\end{equation}
Note that for every $\lambda \in \partial B(\lambda_1 , \frac{3r}{4}\, h(d_{k_1}) )$ and every $1\leq \nu\leq n+1$ one has
\begin{eqnarray*}
|\lambda - \lambda_{k_\nu} | & \geq & | \lambda - \lambda_{k_1} | - | \lambda_{k_1} - \lambda_{k_\nu} | \\
& \geq & \frac{3r}{4} \, h(d_{k_1}) - \frac{r}{4} (h(d_{k_1}) + h(d_{k_\nu}) ) \\
& \geq & \frac{r}{4} h(d_{k_1}) \\
& \geq & \frac{r}{4} h(d_{k_\nu} ) = \frac{1}{c_{k_\nu}} 
\end{eqnarray*}
or 
$$
\frac{1}{|\lambda - \lambda_{k_\nu} |} \leq c_{k_\nu} .
$$
This inequality, the equality \eqref{ekf}, and the decay condition from Lemma \ref{conditiongamma} yield 
\begin{eqnarray*}
\| \tilde{e}_{\bf k} \circledast f \|_\infty & \leq & \frac{3r}{4} h(d_{k_1} ) \, \sup_{\lambda\in \partial B(\lambda_{k_1} , \frac{3r}{4} \, h(d_{k_1}) )} \| e_\lambda \circledast f\|_\infty \,\, c_{k_1} \cdot \, \dots \, \cdot c_{k_{n+1}} \\
& \leq & C_r \,  c_{k_1} \cdot \, \dots \, \cdot c_{k_{n+1}} \\
& = & C_r \, c_{\bf k} .
\end{eqnarray*}
This implies 
$$
\| f_{\bf k}\|_\infty \leq C_r \quad \text{for every } {\bf k}\in\Z^{n+1} ,
$$
and by induction, the first claim is proved.

If the estimate \eqref{decayf} holds and if $c_k = \frac{4}{r} \, \frac{1}{d_k}$, then one may repeat the above proof replacing the function $h$ by the function $h(s) := s$.
\end{proof}

We are ready to prove Theorem \ref{main}.

\begin{proof}[Proof of Theorem \ref{main}]
Let $f\in L^\infty (\R_+ )$ and $\varphi \in C(\R_+ )$ be as in the hypothesis. Define the numbers $c_k >0$ as in Proposition \ref{l5} (depending on whether the condition \eqref{decayf} holds or not), and let the family $(f_{\bf k})_{{\bf k}\in I}$ be defined as in \eqref{fkdef}. By Proposition \ref{l5}, the family $(f_{\bf k})_{{\bf k}\in I}$ is uniformly bounded in $L^\infty (\R_+ )$.

Define the unit ball $B_M$, and the spaces $M$ and $M_*$ as above. We recall that the space $M$ embeds continuously into $L^\infty (\R_+ )$, and that by construction 
$$
M \subset \overline{\L1 \circledast f}^{(L^\infty , {\rm weak}^*)} .
$$ 

Let $T_*:\L1 \to \cL (M_*)$ be the algebra homomorphism defined in Lemma \ref{l3a}, and let 
$$
T : \L1 \to \cL (M) 
$$
be the algebra homomorphism given by $Tg (m) := g\circledast m$, $m\in M$. Clearly, $\| Tg \|_{\cL (M)} = \| T_* g \|_{\cL (M_* )}$ for every $g\in \L1$.

By Lemma \ref{l4}, and since $\sup_{{\bf k}\in I} \| f_{\bf k}\|_M \leq 1$ by Lemma \ref{l1} (c), for every $k\in\Z$,
\begin{eqnarray*}
\| T e_{\lambda_k} \|_{\cL (M)} = \| T_* e_{\lambda_k} \|_{\cL (M_*)} & = & \sup_{\bar{{\bf k}}\in I} \| e_{\lambda_k} \circledast f_{\bar{{\bf k}}} \|_M \\
& = & \sup_{\bar{{\bf k}}\in I} \| \tilde{e}_k \circledast \frac{\tilde{e}_{\bar{{\bf k}}} \circledast f }{c_{\bar{{\bf k}}}} \|_M \\
& = & \sup_{\bar{{\bf k}}\in I} \| \frac{\tilde{e}_{(\bar{{\bf k}},k)} \circledast f}{c_{(\bar{{\bf k}},k)}} \, c_k \|_M \\
& = & \sup_{\bar{{\bf k}}\in I} \| f_{(\bar{{\bf k}},k)} \, c_k \|_M \\
& \leq & c_k .
\end{eqnarray*}
By the definition of $c_k$ (see Proposition \ref{l5}), this leads to the estimate
\begin{equation} \label{Tek}
\| T e_{\lambda_k} \|_{\cL (M)} \leq \left\{ \begin{array}{ll}
C\, \frac{\log d_k}{d_k} & \text{or } \\[2mm]
C\, \frac{1}{d_k}  
\end{array} \right. \quad \text{for every } k\in\Z ,
\end{equation}
depending on whether the condition \eqref{decayf} holds or not.

By Lemma \ref{equivalenthomomorphism}, after replacing the space $M$ by a closed subspace $X$, if necessary, we can assume that the homomorphism $T$ is represented (in the strong sense) by a bounded $C_0$-semigroup $(T(t))_{t\geq 0} \in \cL (X)$. Since $T$ was defined by adjoint convolution, it follows that the semigroup $(T(t))_{t\geq 0}$ is the left-shift semigroup on $X$. If $A$ is the generator of this semigroup, then the estimate \eqref{Tek} implies 
$$
\| R ( \lambda_k , A) \|_{\cL (X)} \leq \left\{ \begin{array}{ll}
C\, \frac{\log d_k}{d_k} & \text{or } \\[2mm]
C\, \frac{1}{d_k}  
\end{array} \right. \quad \text{for every } k\in\Z ,
$$
depending on whether the condition \eqref{decayf} holds or not.

Now let $\beta\in\R$ be arbitrary, and let $k\in\Z$ be such that $|\beta -k|\leq 1$. By the resolvent identity and boundedness of the semigroup $(T(t))_{t\geq 0}$,
\begin{eqnarray*}
\| R(\lambda_\beta ,A) \|_{\cL (X)} & = & \| R(\lambda_k ,A) + i(k-\beta ) R(\lambda_\beta ,A) R(\lambda_k ,A) \|_{\cL (X)} \\
& \leq & (1+C) \, \| R(\lambda_k ,A) \|_{\cL (X)} \\
& \leq & \left\{ \begin{array}{ll}
C\, \frac{\log d_k}{d_k} & \text{or } \\[2mm]
C\, \frac{1}{d_k}  ,
\end{array} \right.
\end{eqnarray*}
depending on whether the condition \eqref{decayf} holds or not. 

By contractivity of the distance function we have
$$
| d_\beta -d_k | \leq |\beta -k| \leq 1 ,
$$
so that 
$$
d_\beta \leq d_k + 1 \leq 2 \, d_k ;
$$
recall that $d_k \geq 2$. This estimate for $d_\beta$ implies
$$
\frac{1}{d_k} \leq 2\, \frac{1}{d_\beta} \quad \text{ and } \quad \frac{\log d_k}{d_k} \leq 2\, \frac{\log d_\beta}{d_\beta} ,
$$
and therefore 
$$
\| R ( \lambda_\beta , A) \|_{\cL (X)} \leq \left\{ \begin{array}{ll}
C\, \frac{\log d_\beta}{d_\beta} & \text{or } \\[2mm]
C\, \frac{1}{d_\beta}  
\end{array} \right. \quad \text{for every } \beta\in\R ,
$$
depending on whether the condition \eqref{decayf} holds or not. 

It remains to show that $f\in {\rm range}\, T^*$. 
For every $g\in \L1$ we can estimate
\begin{eqnarray*}
\| Sg\|_{M^*} & = & \sup_{h\in B_{L^1}} \| S(g*h)\|_{M^*} \\
& = & \sup_{h\in B_{L^1}} \| g* Sh \|_{M^*} \\
& = & \| S\| \, \sup_{h\in B_{L^1}} \| g* \frac{Sh}{\| S\|} \|_{M^*} \\
& \leq & \| S\| \, \sup_{\stackrel{h\in L^1}{\| Sh\|_{M^*}\leq 1}} \| g * Sh \|_{M^*} \\
& = & \| S\| \, \sup_{m^* \in B_{M_*}} \| g* m^* \|_{M^*} \\
& = & \| S\| \, \| T_* g\|_{\cL (M_* )} \\
& = & \| S\| \, \| T g\|_{\cL (M )} .
\end{eqnarray*}
In other words, there is a bounded operator 
$$
R : \overline{{\rm range}\, T}^{\|\cdot \|_{\cL (M)}} \to M_* \subset M^*  
$$
such that $S = RT$. Hence, $T^*R^* = S^* : M^{**} \to L^\infty (\R_+ )$, which implies
\begin{equation} \label{rangest}
{\rm range}\, S^* \subset {\rm range}\, T^* .
\end{equation}
On the other hand, we recall from \eqref{ranges} that ${\rm range}\, S^* = M$. Since $f=f_\infty \in B_M$ by Lemma \ref{l1} (c), we thus obtain $f\in {\rm range}\, T^*$. 

Theorem \ref{main} is completely proved.
\end{proof}

\begin{remark}
It would be interesting to understand the geometric structure of the spaces $M$ and $X$, for example, whether they might be UMD spaces or spaces having nontrivial Fourier type.
\end{remark}

\section{The norm continuity problem} \label{sectionexamples}

In this section we present two examples showing that the norm continuity problem has a negative answer. In these two examples emphasis will be put on precise decay estimates for the resolvent along vertical lines. Before stating the two examples, we recall the following known result; see \cite[Theorem 4.9]{Pa83}, \cite[Theorem 4.1.3]{Fa83}. 

\begin{proposition} \label{logdecay}
Let $A$ be the generator of a bounded $C_0$-semigroup $(T(t))_{t\geq 0}$. Then the following are true:
\begin{itemize}
\item[(i)] If
$$
\| R(2+i\beta ,A) \| = o ( \frac{1}{\log |\beta |} ) \text{ as } |\beta |\to\infty ,
$$
then the semigroup $(T(t))_{t\geq 0}$ is immediately differentiable.
\item[(ii)] If
$$
\| R(2+i\beta ,A) \| = O ( \frac{1}{\log |\beta |} ) \text{ as } |\beta |\to\infty ,
$$
then the semigroup $(T(t))_{t\geq 0}$ is eventually differentiable.
\end{itemize}
\end{proposition}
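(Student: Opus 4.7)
The plan is to convert a decay estimate along the single vertical line $\{\Re\lambda=2\}$ into existence and boundedness of the resolvent in a logarithmic parabolic region, and then to apply a contour deformation argument to the complex inversion formula for $T(t)$.

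First I would show that the decay assumption propagates to the left via the Neumann series for the resolvent. Writing
\[
R(\mu,A)=\sum_{n\ge 0}(2+i\beta-\mu)^n R(2+i\beta,A)^{n+1},
\]
this series converges and produces a uniform bound on $R(\mu,A)$ in every ball $B(2+i\beta,\rho)$ with $\rho\,\|R(2+i\beta,A)\|<1$. Under hypothesis (ii) this gives a constant $c>0$ such that $R(\cdot,A)$ exists and is uniformly bounded on the logarithmic region
\[
\Omega_c:=\{\lambda\in\C:\Re\lambda\ge -c\log(1+|\Im\lambda|)\},
\]
while under hypothesis (i) this holds with $c>0$ \emph{arbitrary}, because the radius of convergence grows faster than $\log|\beta|$ as $|\beta|\to\infty$.

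Next I would apply the complex inversion formula. Since $(T(t))_{t\ge 0}$ is bounded, for $x\in D(A^2)$ one has the absolutely convergent representation
\[
T(t)x=\frac{1}{2\pi i}\int_{3-i\infty}^{3+i\infty}e^{\lambda t}R(\lambda,A)x\,d\lambda,\qquad t>0,
\]
(cf.\ \cite[Thm.~3.12.2]{ABHN01}). By Cauchy's theorem, the contour may be deformed to a curve $\Gamma$ that coincides with $\{\Re\lambda=2\}$ for $|\Im\lambda|\le R_0$ and with the boundary $\{\Re\lambda=-c\log(1+|\Im\lambda|)\}$ of $\Omega_c$ for $|\Im\lambda|>R_0$; the estimates on the resolvent in $\Omega_c$ together with the quadratic decay coming from $x\in D(A^2)$ justify the deformation. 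I would then differentiate formally under the integral sign to obtain, on $D(A^2)$,
\[
AT(t)x=\frac{1}{2\pi i}\int_{\Gamma}\lambda e^{\lambda t}R(\lambda,A)x\,d\lambda.
\]
On the logarithmic part of $\Gamma$ one has $|e^{\lambda t}|=e^{2t}(1+|\Im\lambda|)^{-ct}$, and combined with the bound $\|\lambda R(\lambda,A)\|\lesssim |\Im\lambda|$ this yields an integrand whose norm is $O(|\Im\lambda|^{1-ct})$. The integral therefore converges absolutely \emph{in operator norm} as soon as $ct>2$, and it then defines a bounded operator on all of $X$. Standard density and closedness arguments give $T(t)X\subset D(A)$ and continuity of $t\mapsto AT(t)$ in the operator-norm topology on the relevant range of $t$.

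Putting the pieces together: under the $O$--assumption (ii), the contour width $c$ is fixed, so the integral converges whenever $t>2/c$, which proves eventual differentiability. Under the $o$--assumption (i), $c$ can be chosen arbitrarily large, so for every $t>0$ one may choose $c>2/t$, yielding immediate differentiability. I expect the main technical obstacle to be the rigorous justification of the contour shift and of the differentiation under the integral sign: one must control horizontal connecting segments as $|\Im\lambda|\to\infty$, use the $D(A^2)$-regularisation to make the initial inversion formula absolutely convergent, and then remove this regularisation by the closedness of $A$ and the operator-norm convergence of the shifted integral representing $AT(t)$.
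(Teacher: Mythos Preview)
The paper does not actually prove this proposition: it is stated as a known result with references to \cite[Theorem 4.9]{Pa83} and \cite[Theorem 4.1.3]{Fa83}, and no proof is given. Your outline is essentially the classical argument found in those references---Neumann series to push the resolvent into a logarithmic region $\Sigma_\varphi$ with $\varphi(\beta)\sim c\log\beta$, followed by contour deformation in the complex inversion formula so that the factor $|e^{\lambda t}|=(1+|\Im\lambda|)^{-ct}$ makes $\int_\Gamma \lambda e^{\lambda t}R(\lambda,A)\,d\lambda$ converge in operator norm once $ct>2$---and it is correct. One cosmetic slip: on the logarithmic part of your $\Gamma$ you have $\Re\lambda=-c\log(1+|\Im\lambda|)$, so $|e^{\lambda t}|=(1+|\Im\lambda|)^{-ct}$ without the extra factor $e^{2t}$; this does not affect the convergence threshold.
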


The following is our first counterexample to the norm continuity problem. 
 
\begin{theorem} \label{example1}
There exists a Banach space $X$ and a uniformly bounded $C_0$-semigroup $(T(t))_{t\geq 0}\subset\cL (X)$ with generator $A$ such that:
\begin{itemize}
\item[(i)] the resolvent satisfies the estimate
$$
\| R(2+i\beta ,A) \| = O (\frac{1}{\log |\beta |} ) \quad \text{as } |\beta |\to\infty ,
$$
and in particular the resolvent satisfies the resolvent decay condition \eqref{res-cond},
\item[(ii)] $T(1) = 0$, that is, the semigroup $(T(t))_{t\geq 0}$ is nilpotent, and 
\item[(iii)] whenever $t_0\in [0,1)$, then the semigroup $(T(t))_{t\geq 0}$ is not norm-continuous for $t>t_0$.
\end{itemize}
\end{theorem}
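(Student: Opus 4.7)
The plan is to apply Theorem \ref{main} with $f = 1_{[0,1]}$ and the weight $\varphi(\beta) := 1 + \log(1+\beta)$. This function $f$ is discontinuous at $1$ yet has a Laplace transform $\hat f(\lambda) = (1-e^{-\lambda})/\lambda$ which is entire (the singularity at $0$ is removable). On the domain $\Sigma_\varphi$, the boundary relation gives $|e^{-\lambda}| \leq e(1+|\Im\lambda|)$, hence $|\hat f(\lambda)| \leq (1 + e(1+|\Im\lambda|))/|\lambda|$, which is uniformly bounded because $|\lambda| \geq \max(|\Re\lambda|,|\Im\lambda|)$ and the apparent singularity at $0$ is removable. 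Also $\inf\varphi = 1 > 0$ and $\varphi(\beta) \to \infty$, so the hypotheses of Theorem \ref{main} are met. With this $\varphi$, a routine geometric estimate shows $d_\beta \sim \log|\beta|$ as $|\beta|\to\infty$.

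Next I would verify the stronger decay condition \eqref{decayf} in order to obtain the sharper resolvent bound $C/d_\beta$. An explicit computation gives
\[
(e_\lambda \circledast f)(t) \;=\; \begin{cases} (1 - e^{-\lambda(1-t)})/\lambda, & 0 \leq t < 1,\\ 0, & t \geq 1,\end{cases}
\]
so $\lambda \mapsto e_\lambda \circledast f$ is entire with values in $L^\infty(\R_+)$. Fix any $r \in (0,1)$. For $\lambda \in B(\lambda_\beta, r d_\beta)$ and $|\beta|$ large, one has $|\lambda| \geq |\beta|/2$ and $\Re\lambda \geq 2 - r d_\beta$, so that $|e^{-\lambda(1-t)}| \leq e^{\max(0,-\Re\lambda)} \lesssim |\beta|^r$ uniformly in $t \in [0,1]$. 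Hence $\|e_\lambda \circledast f\|_\infty \lesssim (1 + |\beta|^r)/|\beta| = o(1/\log|\beta|) = o(1/d_\beta)$, which verifies \eqref{decayf}.

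Applying Theorem \ref{main} now yields a Banach space $X \hookrightarrow L^\infty(\R_+)$, left-shift invariant, on which the left-shift semigroup $(T(t))_{t\geq 0}$ is bounded and strongly continuous, with generator $A$ satisfying $\|R(\lambda_\beta, A)\|_{\cL(X)} \leq C/d_\beta = O(1/\log|\beta|)$. This is precisely (i). For (ii), note that by Theorem \ref{main}(iv), $X \subset M \subset \overline{L^1\circledast f}^{(L^\infty,\mathrm{weak}^*)}$; a direct calculation shows that $g \circledast 1_{[0,1]}$ vanishes a.e.\ on $[1,\infty)$ for every $g \in L^1(\R_+)$, and the subspace of $L^\infty$ consisting of functions vanishing a.e.\ on $[1,\infty)$ is weak$^*$ closed. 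Thus every $h \in X$ is supported in $[0,1]$, and for $t \geq 1$ the shift satisfies $(T(t)h)(s) = h(s+t) = 0$ a.e., i.e., $T(1) = 0$. Finally, Theorem \ref{main}(iii) gives $f = 1_{[0,1]} \in \mathrm{range}\, T^*$. If the semigroup were norm-continuous for $t > t_0$ with some $t_0 \in [0,1)$, then Lemma \ref{necessary-cond}(d) would force $1_{[0,1]}$ to admit a representative continuous on $(t_0,\infty)$; but $1_{[0,1]}$ has essential left and right limits $1$ and $0$ at $t = 1 \in (t_0,\infty)$, so no such representative exists. This contradiction establishes (iii).

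The main obstacle I expect is the verification of \eqref{decayf} rather than just the weaker hypothesis — one must confirm that the growth of $e^{-\lambda(1-t)}$ on a ball of radius $\sim \log|\beta|$ remains controlled enough to be absorbed by the $1/|\lambda| \sim 1/|\beta|$ factor. Fortunately the gap between polynomial decay $|\beta|^{r-1}$ and the logarithmic target $1/\log|\beta|$ is generous, so any $r < 1$ works; this is what lets Theorem \ref{main} deliver the sharp logarithmic resolvent estimate that is optimal in view of Proposition \ref{logdecay}(ii).
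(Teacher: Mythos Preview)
Your proposal is correct and follows essentially the same route as the paper: take $f=1_{[0,1]}$, choose $\varphi(\beta)\sim 1+\log|\beta|$, compute $e_\lambda\circledast f$ explicitly to verify both the boundedness of $\hat f$ on $\Sigma_\varphi$ and the stronger decay condition \eqref{decayf}, then invoke Theorem \ref{main} together with Lemma \ref{necessary-cond}(d) and Theorem \ref{main}(iv) for (iii) and (ii) respectively. The only cosmetic differences are your choice $\varphi(\beta)=1+\log(1+\beta)$ versus the paper's $1+\log^+\beta$, and your slightly more explicit justification (via weak$^*$ closedness of the annihilator of $L^1([1,\infty))$) that functions in $X$ are supported in $[0,1]$.
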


\begin{proof}
Let $f=1_{[0,1]}$ be the characteristic function of the interval $[0,1]$. Since $f$ has compact support, the Laplace transform $\hat{f}$ and also the function $\lambda \mapsto e_\lambda\circledast f$ extend to entire functions, and for every $\lambda\in\C\setminus \{ 0\}$ and every $t\in\R_+$,
$$
(e_\lambda \circledast f )\, (t) = \left\{ \begin{array}{ll}
\frac{1}{\lambda} \, (1- e^{-\lambda (1-t)} ) & \text{if } 0\leq t\leq 1 , \\[2mm]
0 & \text{if } t>1 .
\end{array} \right.
$$
Hence, for every $\lambda\in\C\setminus \{ 0\}$,
\begin{equation} \label{basicest}
\| e_\lambda \circledast f\|_\infty \leq \left\{ \begin{array}{ll}
2 \, \frac{e^{-{\rm Re}\, \lambda}}{|\lambda |} & \text{if }{\rm Re}\, \lambda <0, \\[2mm]
2\, \frac{1}{|\lambda |} & \text{if } {\rm Re}\, \lambda \geq 0 , \, \lambda\not= 0 .
\end{array} \right.
\end{equation}
Let $\varphi \in C( \R_+ )$ be the function given by 
$$
\varphi (\beta ) = 1+\log^+ ( \beta  ) , \quad \beta \geq 0 ,
$$
where $\log^+$ is the positive part of the logarithm. Clearly $\lim_{\beta \to\infty } \varphi (\beta ) = +\infty$.

It follows from \eqref{basicest} that
$$
\sup_{\lambda\in\Sigma_\varphi} |\hat{f} (\lambda ) | < \infty ,
$$
so that $f$ satisfies the hypothesis of Theorem \ref{main}. By the definition of $\varphi$,
$$
\frac{3+ \log^+ |\beta |}{2} \leq 
d_\beta \leq 3+ \log^+ |\beta |  \quad \text{for every } \beta\in\R ,
$$ 
where, as before, we put $d_\beta := {\rm dist}\, (\lambda_\beta , \partial\Sigma_\varphi )$ and $\lambda_\beta = 2+i\beta$. From \eqref{basicest} one therefore also obtains for every $r\in (0,1)$ the estimate
\begin{eqnarray*}
\sup_{\lambda\in B(\lambda_\beta , r \, d_\beta )} \| e_\lambda \circledast f\|_\infty & \leq & 2\, \sup_{\lambda\in B(\lambda_\beta , r \, d_\beta )} \frac{\max \{ e^{-{\rm Re}\, \lambda} , 1\} }{|\lambda |} \\
& \leq & C \, \frac{e^{r \log |\beta |}}{|\beta | - r\, \log^+ |\beta |} \\
& \leq & C \, |\beta |^{-(1-r)} ,
\end{eqnarray*}
and in particular
$$
\sup_{\lambda\in B(\lambda_\beta , r \, d_\beta )} \| e_\lambda \circledast f\|_\infty \leq C_r \, \frac{1}{d_\beta} \quad \text{for every } \beta\in\R .
$$
This means that the function $f$ satisfies the decay condition \eqref{decayf} from Theorem \ref{main}. 

By Theorem \ref{main}, there exists a left-shift invariant Banach space $X\hookrightarrow L^\infty (\R_+ )$ such that the resolvent of the generator $A$ of the left-shift semigroup (which is strongly continuous on $X$) satisfies the decay estimate
$$
\| R(2+i\beta ,A) \|_{\cL (X)} \leq C \, \frac{1}{1+ \log^+ |\beta |} \quad \text{for every } \beta\in \R ,
$$
so that the resolvent satisfies the resolvent decay condition \eqref{res-cond}. Moreover, if $T:\L1 \to \cL (X)$ is the algebra homomorphism which is represented (in the strong sense) by the left-shift semigroup, then $f = 1_{[0,1]} \in {\rm range}\, T^*$. In particular, by Lemma \ref{necessary-cond} (d), the semigroup cannot be continuous for $t>t_0$ whenever $t_0\in [0,1)$. 

On the other hand, it follows from Theorem \ref{main} (iv) that every function in $X$ is supported in the interval $[0,1]$ so that the left-shift semigroup on $X$ vanishes for $t\geq 0$. 
\end{proof}

In the second example we show that there are also $C_0$-semigroups which are never norm-continuous, whose generator satisfies the resolvent decay condition \eqref{res-cond}, and the decay of the resolvent along vertical lines is even arbitrarily close to a logarithmic decay. Note that, by Proposition \ref{logdecay}, a logarithmic decay as in Theorem \ref{example1} (i) is not possible for semigroups which are not eventually norm-continuous, that is, norm-continuous for $t>t_0$.

\begin{theorem} \label{example2}
Let $h \in C (\R_+ )$ be a positive, increasing and unbounded function such that also the function $\log^+ / h$ is increasing and unbounded. Then there exists a Banach space $X$ and a uniformly bounded $C_0$-semigroup $(T(t))_{t\geq 0}\subset\cL (X)$ with generator $A$ such that:
\begin{itemize}
\item[(i)] the resolvent satisfies the estimate
$$
\| R(2+i\beta ,A) \| = O (\frac{h (|\beta | )}{\log |\beta |} ) \quad \text{as } |\beta |\to\infty ,
$$
and in particular the resolvent satisfies the resolvent decay condition \eqref{res-cond}, and
\item[(ii)] the semigroup $(T(t))_{t\geq 0}$ is not eventually norm-continuous.
\end{itemize}
\end{theorem}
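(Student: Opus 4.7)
\emph{Proof plan.} The strategy is to parallel the proof of Theorem \ref{example1}, with two modifications: (a) tailor $\varphi$ so that $1/d_\beta$ matches the target rate $h(|\beta|)/\log|\beta|$; and (b) choose the representing function $f$ so that it has jumps on every half-line $(t_0,\infty)$, whence by Lemma \ref{necessary-cond} (d) the resulting semigroup cannot be eventually norm-continuous.

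Concretely, first fix $\varphi\in C(\R_+)$, positive and unbounded, with $\inf\varphi>0$ and $\varphi(\beta) = \log\beta/h(\beta)$ for all $\beta$ sufficiently large. The hypotheses on $h$ make such a choice possible, and yield $d_\beta \asymp \log|\beta|/h(|\beta|)$ as $|\beta|\to\infty$. Second, construct $f\in L^\infty(\R_+)$ satisfying: (a) $\hat f$ extends to a bounded analytic function on $\Sigma_\varphi$; (b) $\lambda\mapsto e_\lambda\circledast f$ extends analytically to $\Sigma_\varphi$ and obeys \eqref{decayf} for some $r\in(0,1)$; and (c) $f$ is discontinuous on every half-line $(t_0,\infty)$. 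Given such an $f$, Theorem \ref{main} delivers a Banach space $X\hookrightarrow L^\infty(\R_+)$, a bounded $C_0$ left-shift semigroup $(T(t))_{t\ge 0}$ on $X$ with generator $A$, and $f\in{\rm range}\,T^*$; the sharper resolvent estimate in Theorem \ref{main}, available by (b), gives
\[
\|R(\lambda_\beta,A)\|_{\cL(X)} \le \frac{C}{d_\beta} \le C'\,\frac{h(|\beta|)}{\log|\beta|},
\]
which is (i). If $(T(t))$ were norm-continuous for $t>t_0$ for some $t_0\ge 0$, Lemma \ref{necessary-cond} (d) would force $f$ to be continuous on $(t_0,\infty)$, contradicting (c); hence (ii) holds.

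The main obstacle is the construction of $f$. Because $\varphi$ is unbounded, on $\partial\Sigma_\varphi$ the factors $|e^{-\lambda t_n}|$ grow without bound in $|\Im\lambda|$, so no positive-coefficient block sum such as $\sum_n c_n 1_{[t_n,t_n+1]}$ with $t_n\to\infty$ can have bounded Laplace transform on $\Sigma_\varphi$; genuine cancellation across distinct blocks is essential. A natural ansatz is a series $f = \sum_n c_n g_n(\cdot - t_n)$ in which each $g_n$ is a compactly supported signed block (with higher-order vanishing of $\hat g_n$ at the origin) and the pairs $(c_n,t_n)$ are calibrated to the growth of $\varphi$, so that the Dirichlet-type sum $\sum_n c_n \hat g_n(\lambda) e^{-\lambda t_n}$ admits a bounded analytic extension to $\Sigma_\varphi$. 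Making this cancellation simultaneously compatible with (a), with the pointwise decay \eqref{decayf} on balls $B(\lambda_\beta, r d_\beta)$, and with the persistence of jumps at arbitrarily large times $t_n$, constitutes the technical heart of the proof.
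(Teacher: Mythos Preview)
Your high-level strategy matches the paper exactly: choose $\varphi(\beta)=1+\log^+\beta/h(\beta)$, construct $f\in L^\infty(\R_+)$ with discontinuities on every half-line such that $\hat f$ and $\lambda\mapsto e_\lambda\circledast f$ extend boundedly to $\Sigma_\varphi$ with the decay \eqref{decayf}, then invoke Theorem \ref{main} and Lemma \ref{necessary-cond} (d). The resolvent estimate and the non-norm-continuity then follow just as you indicate.

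The problem is your analysis of the ``main obstacle.'' Your assertion that no positive-coefficient block sum $\sum_n c_n\,1_{[t_n,t_n+1]}$ can have bounded Laplace transform on $\Sigma_\varphi$, and that ``genuine cancellation across distinct blocks is essential,'' is false. The paper takes precisely $f=\sum_{n\ge 0} a_n\,1_{[n,n+1]}$ with $a_n>0$ decreasing. The point you are missing is that the $1/|\lambda|$ prefactor and the sublogarithmic growth of $\varphi$ (forced by $\log^+/h$ unbounded) leave room for a positive entire generating function $\sum a_n z^n$ of controlled growth. Concretely, one defines an increasing unbounded $\psi$ by $\psi(e\,s^{1/h(s)})=\tfrac14 h(s)$ and chooses $a_n\searrow 0$ so that $\sum_{n\ge 0} a_n r^{n+1}\le r^{\psi(r)}$ for all $r\ge 1$. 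A direct computation then gives
\[
\|e_\lambda\circledast f\|_\infty \le \frac{C}{|\lambda|}\bigl(e^{-\Re\lambda\,\psi(e^{-\Re\lambda})}+2\bigr),
\]
and on $\overline{B(\lambda_\beta,2+\varphi(|\beta|))}$ the calibration of $\psi$ to $h$ yields $e^{\varphi(|\beta|)\psi(e^{\varphi(|\beta|)})}\le C\,|\beta|^{1/2}$ (here one uses $h(\beta)\le\log\beta$ for large $\beta$, which follows from $\log^+/h$ increasing and unbounded). This gives boundedness on $\Sigma_\varphi$ and the sharp decay \eqref{decayf} simultaneously, while $f$ has jumps at every integer. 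No cancellation, higher-order vanishing, or Dirichlet-series subtleties are needed; the construction is entirely monotone.
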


\begin{proof}
Since the function $\log^+ /h$ is increasing and unbounded, then also the function $s\to s^{1/h(s)} = e^{\log s / h(s )}$ is increasing and unbounded for $s\geq 1$. We may assume that this function is strictly increasing for $s\geq 1$. In particular, the function $\psi$ given by
\begin{equation} \label{psicond}
\psi ( e\, s^{1 / h(s )} ) := \frac14 h(s) , \quad s\geq 1 ,
\end{equation}
is well-defined, increasing and unbounded.

Choose coefficients $a_n >0$ such that $1\geq a_n \geq a_{n+1}$ and such that 
\begin{equation} \label{entiregrowth}
\sum_{n=0}^\infty a_n r^{n+1} \leq r^{\psi (r)} \text{ for every } r\geq 1 ;
\end{equation}
it is an exercise to show that such coefficients exist (see also \cite[Problem 2, p.1]{Le96}).

We put
$$
f = \sum_{n=0}^\infty a_n 1_{[n,n+1]} .
$$
Then clearly $f\in L^\infty (\R_+ )$ and it follows from \eqref{entiregrowth} that the function $\lambda \mapsto e_\lambda \circledast f$ extends to an entire function. It is straightforward to show that for every $\lambda\in\C\setminus \{ 0\}$ and every $t\in\R_+$ one has
\begin{eqnarray*}
(e_\lambda \circledast f)\, (t) & = & \frac{e^{\lambda t}}{\lambda} (1-e^{-\lambda}) \sum_{n=[t]}^\infty a_{n} e^{-\lambda n} - a_{[t]} \frac{e^{-\lambda ([t]-t)}-1}{\lambda} \\
& = & \frac{e^{\lambda t}}{\lambda} (1-e^{-\lambda}) \sum_{n=[t]+1}^\infty a_{n} e^{-\lambda n} + a_{[t]} \frac{1-e^{-\lambda ([t]+1-t)}}{\lambda} .
\end{eqnarray*}
If ${\rm Re}\, \lambda < 0$, this yields the estimate
\begin{eqnarray*}
|(e_\lambda \circledast f)\, (t)| & \leq & \frac{2}{|\lambda |} \sum_{n=0}^\infty a_{n+[t]} e^{-{\rm Re}\, \lambda (n +1)} + \frac{2}{|\lambda |} \\
& \leq & \frac{2}{|\lambda |} \big( \sum_{n=0}^\infty a_{n} e^{-{\rm Re}\, \lambda (n +1)} + 1 \big) \\
& \leq & \frac{2}{|\lambda |} \big( e^{-{\rm Re}\, \lambda \psi ( e^{-{\rm Re}\, \lambda } )} + 2 \big) ,
\end{eqnarray*}
where in the second line we have used the fact that the sequence $(a_n)$ is decreasing, and in the third line we have used the estimate \eqref{entiregrowth}. If ${\rm Re}\, \lambda \geq 0$, then we obtain the estimate
$$
|(e_\lambda \circledast f)\, (t)| \leq \frac{C}{|\lambda |} 
$$
for some $C\geq 1$, so that
$$
\| e_\lambda \circledast f \|_\infty \leq \frac{C}{|\lambda |} \, \big( e^{-{\rm Re}\, \lambda \psi ( e^{-{\rm Re}\, \lambda } )} + 2 \big) \text{ for every } \lambda\in\C\setminus \{ 0\} .
$$

Let $\varphi (\beta ) := 1+\frac{\log^+ \beta}{h(\beta )}$, $\beta\geq 0$. By assumption, $\lim_{\beta \to\infty} \varphi (\beta ) = \infty$. Moreover, for every $\beta\in\R$ large enough,
\begin{eqnarray*}
\sup_{\lambda\in B(2+i\beta , 2+\varphi (\beta ))} \| e_\lambda \circledast f\|_\infty & \leq & \sup_{\lambda\in B(2+i\beta , 2+\varphi (\beta ))} \frac{C}{|\lambda |} \, \big( e^{-{\rm Re}\, \lambda \psi ( e^{-{\rm Re}\, \lambda } )} + 2 \big) \\
& \leq & C\, \frac{e^{\varphi (|\beta |) \psi (e^{\varphi (|\beta |)} )} +2}{|\beta | - \varphi (|\beta |)} .
\end{eqnarray*}
For all $\beta$ large enough we have $\varphi (|\beta | ) \leq \frac12 \, |\beta |$. Moreover, if $|\beta |\geq 2$, then
\begin{eqnarray*}
e^{\varphi (|\beta |) \psi (e^{\varphi (|\beta |)})} & = & e^{\psi (e\, |\beta |^{1/h(|\beta |)})} \, |\beta |^{\frac{\psi (e\, |\beta |^{1/h(|\beta |)})}{h( |\beta |)}} \\
& = & |\beta |^{\frac14 \, \frac{h(|\beta |)}{\log |\beta|}} \, |\beta |^\frac14 \\
& \leq & C\, |\beta |^\frac12 ,
\end{eqnarray*}
by definition of the functions $\varphi$ and $\psi$. Hence, if $\beta$ is large enough, then 
$$
\sup_{\lambda\in B(2+i\beta , 2+\varphi (\beta ))} \| e_\lambda \circledast f\|_\infty \leq C\, |\beta |^{-\frac12} .
$$

In particular,
$$
\sup_{\lambda\in\Sigma_\varphi} \| e_\lambda \circledast f\|_\infty < \infty .
$$
Moreover, if we let, as before, $\lambda_\beta = 2+i\beta$ and $d_\beta = {\rm dist}\, (\lambda_\beta , \partial\Sigma_\varphi )$, then 
$$
\frac{2+\varphi (|\beta | )}{2} \leq 
d_\beta \leq 2+ \varphi (|\beta | ) \text{ for every } \beta\in\R \text{ large enough} 
$$
and therefore
$$
\sup_{\lambda\in B(\lambda_\beta , d_\beta )} \| e_\lambda \circledast f\|_\infty \leq C\, \frac{1}{d_\beta} \quad  \text{for every } \beta\in\R . 
$$
By Theorem \ref{main}, there exists a left-shift invariant Banach space $X\hookrightarrow L^\infty (\R_+ )$ such that the left-shift semigroup $(T(t))_{t\geq 0}\subset\cL (X)$ is strongly continuous and such that the resolvent of the generator $A$ satisfies the estimate
$$
\| R(2+i\beta ,A) \| \leq C\, \frac{h(|\beta |)}{\log |\beta |} \quad \text{for every } \beta \in\R , \, |\beta |>1 , 
$$
so that the resolvent decay condition \eqref{res-cond} is satisfied. Moreover, still by Theorem \ref{main}, if $T: \L1 \to \cL (X)$ is the algebra homomorphism which is represented (in the strong sense) by the semigroup $(T(t))_{t\geq 0}$, then  $f\in {\rm range}\, T^*$. Since the function is not continuous on any interval of the form $(t_0 ,\infty )$, by Lemma \ref{necessary-cond} (d), the semigroup $(T(t))_{t\geq 0}$ cannot be eventually norm-continuous.
\end{proof}

{\it Acknowledgement.} The authors would like to thank the referee for his/her careful reading
and helpful remarks and suggestions to improve on the original version.

\nocite{HiPh57}
\nocite{ElMEn94}
\nocite{ElMEn96}
\nocite{GoWe99}
\nocite{DiUh77}
\nocite{MN91}
\nocite{BlMa96}
\nocite{Cj98}
\nocite{Il07}
\nocite{GGMS87}
\nocite{Cj99}
\nocite{Ki98}
\nocite{Gi90}
\nocite{Gi91}

\bibliographystyle{amsplain}

\providecommand{\bysame}{\leavevmode\hbox to3em{\hrulefill}\thinspace}
\providecommand{\MR}{\relax\ifhmode\unskip\space\fi MR }
\providecommand{\MRhref}[2]{%
  \href{http://www.ams.org/mathscinet-getitem?mr=#1}{#2}
}
\providecommand{\href}[2]{#2}

\end{document}